\newtheorem{definition}{Definition}
\newtheorem{theorem}{Theorem}
\newtheorem{proposition}{Proposition}[section]
\theoremstyle{break} 
\newenvironment{proof}%
{{\par\noindent \bf Proof. \nobreak}}%
{\nobreak \removelastskip \nobreak \hfill $\Box$ \medbreak}
{{\par\noindent \bf Proof \nobreak}}%
{\nobreak \removelastskip \nobreak \hfill $\Box$ \medbreak}
{{\par\noindent \bf Proof lemma. \nobreak}}%
{\nobreak \removelastskip \nobreak \bf End proof lemma. \medbreak}
\newenvironment{remark}{\par \medskip \noindent {\bf Remark. }\nobreak}{\par \medskip}
\def\paragraph#1{{\bf #1\ }}
\newcommand{\RN}[1]{%
  \textup{\uppercase\expandafter{\romannumeral#1}}%
}
\newcommand{\expo}{\mathrm{e}}
\newcommand{\dd}{\mathrm{d}}
\newcommand{\DD}{\mathrm{D}}
\newcommand{\KL}{\mathrm{KL}}
\newcommand{\NN}{\mathrm{N}}
\newcommand{\E}{\mathrm{E}}
\newcommand{\VV}{\mathrm{V}}
\newcommand{\overbar}[1]{\mkern 1.5mu\overline{\mkern-1.5mu#1\mkern-1.5mu}\mkern 1.5mu}
\title{Uncovering a two-phase dynamics from a dollar exchange model with bank and debt}
\author{Fei Cao \footnotemark[1] \and S\'ebastien Motsch \footnotemark[2]}
\begin{document}
\maketitle

\footnotetext[1]{University of Massachusetts Amherst - Department of Mathematics and Statistics, Amherst, MA 01003, USA}
\footnotetext[2]{Arizona State University - School of Mathematical and Statistical Sciences, 900 S Palm Walk, Tempe, AZ 85287-1804, USA}

\tableofcontents

\begin{abstract}
We investigate the unbiased model for money exchanges with collective debt limit: agents give at random time a dollar to one another as long as  they have at least one dollar or they can borrow a dollar from a central bank if the bank is not empty. Surprisingly, this dynamics eventually leads to an asymmetric Laplace distribution of wealth (conjectured in \cite{xi_required_2005} and shown formally in a recent work \cite{lanchier_rigorous_2018-1}). In this manuscript, we carry out a formal mean-field limit as the number of agents goes to infinity where we uncover a two-phase (ODE) dynamics. Convergence towards the unique equilibrium (two-sided geometric) distribution in the large time limit is also shown and the role played by the bank and debt (in terms of Gini index or wealth inequality) will be explored numerically as well.
\end{abstract}

\noindent {\bf Key words: Econophysics, Agent-based model, Mean-field, Two-phase, Bank}

\section{Introduction}
\setcounter{equation}{0}

Econophysics is a subfield of statistical physics that apply concepts and techniques from traditional physics to economics or finance \cite{dragulescu_statistical_2000}. One of the primary goal of this area of research is to explain how various economical phenomena could be derived from universal laws in statistical physics under certain model assumptions, and we refer to \cite{kutner_econophysics_2019} for a general review.

There several motivations for the study of econophysics models: from the perspective of a policy maker, how to influence/control the emerging wealth inequality (measured by Gini index) in order to mitigate the alarming gap between rich and poor is a central issue to be dealt with. From a mathematical viewpoint, the fundamental mechanisms behind the formation of macroscopic phenomena, for instance various possible wealth distributions from different agent-based money exchange models, must be thoroughly understood. For a given (stochastic) agent-based model, we would like to identify a limit (deterministic) dynamics when we send the number of individuals to infinity, and then the deterministic system will be further analyzed with the intention of proving its convergence to equilibrium (if there is one) for large time. This paradigm has been implemented in vast amount of works across different fields of applied mathematics, see for instance \cite{bresch_mean_2019,cao_k_2021,carlen_kinetic_2013,motsch_short_2018}.

In this work, we consider a simple mechanism for money exchange involving a bank, meaning that there are a fixed number of agents (denoted by $N$) and one bank. We denote by $S_i(t)$ the amount of dollars the agent $i$ has at time $t$ and we suppose that $\sum_{i=1}^N S_i(0) = N\,\mu$ for some fixed $\mu \in \mathbb{R}_+$ so that $N\,\mu \in \mathbb{N}_+$. Moreover, we denote by $B_* := B_c + B_d$ the initial amount of dollars in the bank, where $B_c$ and $B_d$ represent the amount of dollars owned by the bank in the form of ``cash'' and in the form of ``debt'' (borrowed by agents), respectively. Also, we introduce another parameter $\nu \in \mathbb{R}_+$ so that $N\,\mu\,\nu \in \mathbb{N}_+$ and set $B_* = N\,\mu\,\nu$.

The model investigated in this work was proposed in \cite{xi_required_2005} and revisited by \cite{lanchier_rigorous_2018-1}: at random time (generated by an exponential law), an agent $i$ (the ``giver'') and an agent $j$ (the ``receiver'') are picked uniformly at random. If the ``giver'' $i$ has at least one dollar (i.e. $S_i\geq 1$) or if the central bank has ``cash'' (i.e. $B_c\geq 1$), then the receiver $j$ receives a dollar. Otherwise, when the receiver $i$ has no dollar and the bank has no cash, then nothing happens. We illustrate the dynamics in figure \ref{fig:illustration_model} explaining the three cases when agent $i$ is picked to give one dollar to agent $j$. From now on, we will call this model the \textbf{unbiased exchange model with collective debt limit} and it can be represented by \eqref{unbiased_exchange_with_debt}.
\begin{equation}
\label{unbiased_exchange_with_debt}
(S_i,S_j)~ \begin{tikzpicture} \draw [->,decorate,decoration={snake,amplitude=.4mm,segment length=2mm,post length=1mm}]
  (0,0) -- (.6,0); \node[above,red] at (0.3,0) {\small{$λ$}};\end{tikzpicture}~  (S_i-1,S_j+1) \qquad \text{ if}~ S_i\geq 1~ \text{{\bf or}}~ B_c \geq 1.
\end{equation}
Notice that when the bank gives a dollar to agent $j$, there is still one dollar withdrew from the giver $i$, i.e. the debt of agent $i$ increases (represented in red in figure \ref{fig:illustration_model}). The debt of agent $i$ could be reduced once it will become a ``receiver''. It is also important to notice that in this model the bank never losses money, it just transforms its ``cash'' $B_c$ into ``debt''.  Without the bank, agents can only give a dollar when they have at least one dollar (hence no debt is allowed). In this case, the model is termed as the one-coin model in \cite{lanchier_rigorous_2017}, the unbiased exchange model in \cite{cao_derivation_2021,cao_interacting_2022}, and the mean-field zero range process in \cite{merle_cutoff_2019}.

\begin{remark}
In order to have the correct asymptotic as the number of agents goes to infinity $N→+∞$, we need to adjust the rate $λ$  by normalizing by $N$ so that the rate of a typical agent giving a dollar per unit time is of order $1$.
\end{remark}

\begin{figure}[p]
\centering
\includegraphics[width=.97\textwidth]{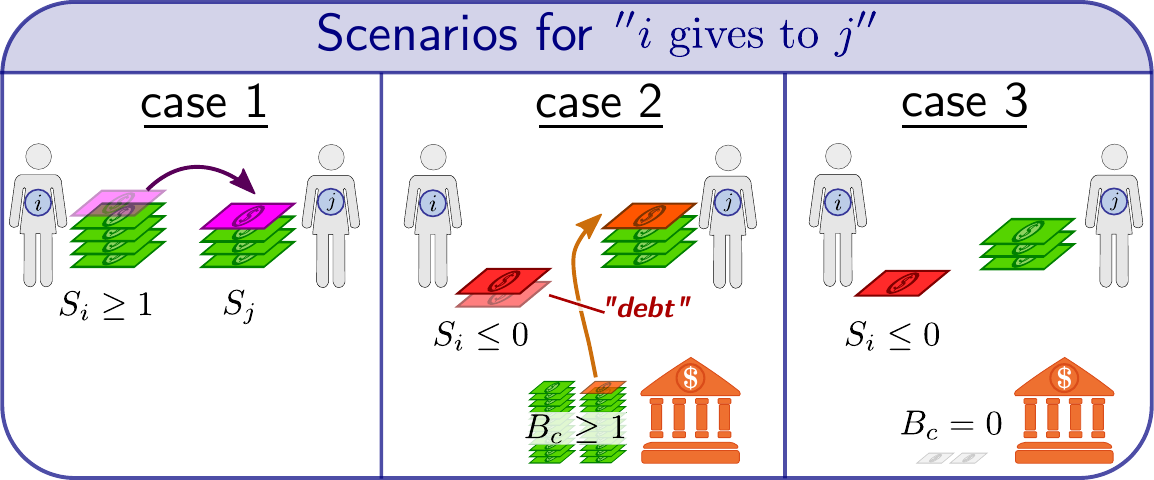}
\caption{Illustration of the unbiased exchange model with collective debt limit: at random time, a ``giver'' $i$ has to give one dollar to a ``receiver'' $j$. Three cases are possible. {\bf Case 1}: if the ``giver'' $i$ has at least a dollar (i.e. $S_i≥1$), then it gives it to $j$. {\bf Case 2}: if $i$ does not have a dollar ($S_i≤0$) and the central bank has cash (i.e. $B_c≥1$), then $j$ receives one dollar from the bank and the debt of $i$ is increased by one. {\bf Case 3}: the giver $i$ and the bank do not have any money, nothing happens.}
\label{fig:illustration_model}
\end{figure}

\begin{figure}[p]
\centering
\includegraphics[width=.97\textwidth]{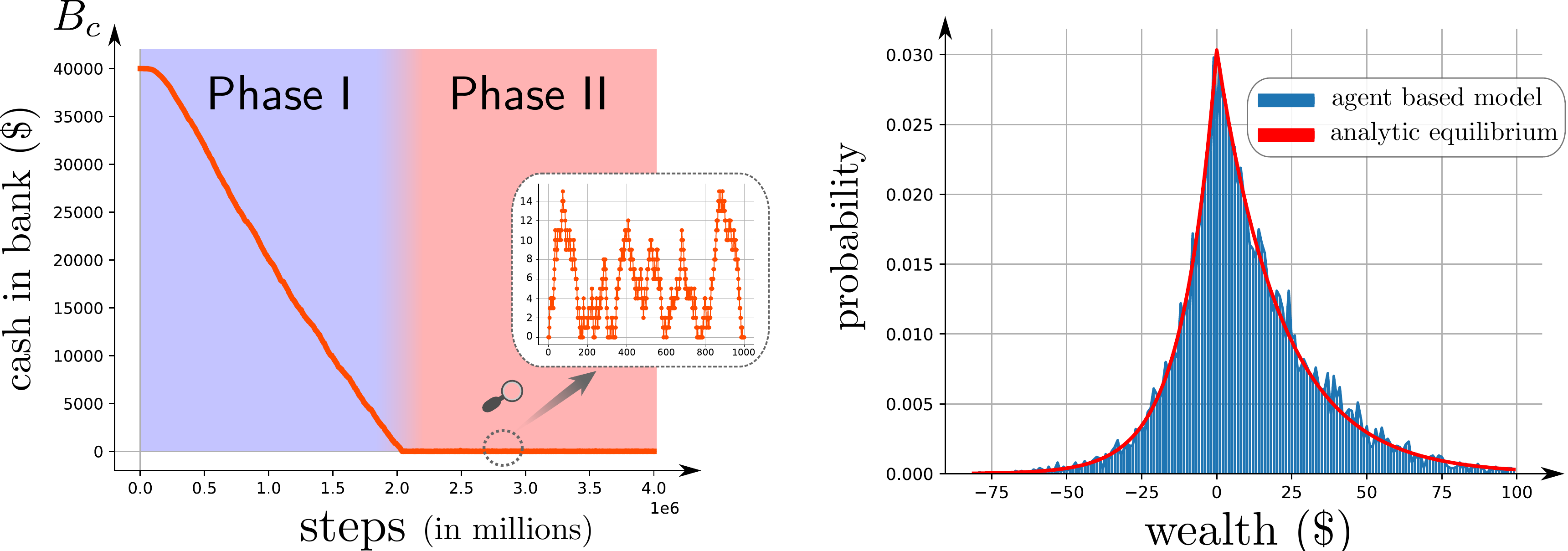}
\caption{{\bf Left:} The evolution of the amount of cash in the bank $B_c$ when running a simulation with $N=10,000$ agents with $μ=10$ dollars per agent. In a first phase, $B_c$ decays linearly until it reaches zero. After that, in the second phase $B_c$ remains close to zero. {\bf Right:} The distribution of wealth for the agent-based dynamics after $10,000,000$ steps. Notice that this distribution is well-approximated by an asymmetric Laplace distribution given by \eqref{eq:Laplace}-\eqref{eq:parameters} with $μ=10$ and $ν=.4$.}
\label{fig:numerical_simulation}
\end{figure}

The fundamental question of interest is the exploration of the limiting money distribution among the agents as the total number of agents $N$ and time $t$ become large. To foresee the behavior of the dynamics under these limits, we perform a numerical simulation with  $N=10,000$ agents over $10,000,000$ steps. In figure \ref{fig:numerical_simulation}-left, we plot the evolution of the cash in the bank $B_c$ over time. We observe a first phase where the cash $B_c$ is decaying linearly until it reaches zero (after roughly $2$ millions steps). After that, there is a second phase where $B_c$ remains close to zero. In figure \ref{fig:numerical_simulation}-right, the distribution of the wealth distribution is plotted after $10$ millions steps. This distribution is well approximated by an asymmetric Laplace distribution given in \eqref{eq:Laplace}.

This numerical result will be explained by our derivation in section \ref{sec:mean_field} and the following asymptotic analysis in section \ref{sec:large_time_limit}. Our approach illustrated by figure \ref{fig:scheme_sketch} consists in two steps. Our first step consists in deriving the limit dynamics as the number of agents goes to infinity $N→+∞$ (section \ref{sec:mean_field}). With this aim, we introduce the probability distribution of wealth:
\begin{equation}
  \label{eq:p}
  {\bf p}(t)=\left(\ldots,p_{-n}(t),\ldots,p_{-1}(t),p_0(t),p_1(t),\ldots,p_n(t),\ldots\right) 
\end{equation}
with $p_n(t)= \{``\text{probability that a typical agent has } n \text{ dollars at time}~ t "\}$. The evolution of ${\bf p}(t)$ will be given by a (deterministic) nonlinear ordinary differential equations. To fully justify this transition from a stochastic interacting systems into a deterministic set of ODEs, one needs the so-called \emph{propagation of chaos} \cite{sznitman_topics_1991}. We do not investigate the proof in this manuscript, the derivation has been rigorously justified in various models arising from econophysics, see for instance \cite{cao_derivation_2021,cao_entropy_2021,cao_explicit_2021,cao_interacting_2022,graham_rate_2009,cortez_quantitative_2016,cortez_particle_2018}. The additional difficulty here is that the evolution of ${\bf p}(t)$ is split into two phases. Indeed, the evolution changes at the first time when there is no more cash in the bank. We will denote by $t_*$ the time at which such event occur, i.e. at $t_*$ the bank is empty for the first time. The evolution equation of ${\bf p}(t)$ takes the form of:
\begin{equation}
  \label{eq:evolution_p}
  \begin{array}{cccc}
  \text{\bf Phase I:} &  \hspace{.5cm}  ∂_t {\bf p} = λ Q_1[{\bf p}] \hspace{.5cm} & \text{for} & 0≤t≤t_* \\ \smallskip
  \text{\bf Phase II:} & \hspace{.5cm} ∂_t {\bf p} = λ Q_2[{\bf p}] \hspace{.5cm} & \text{for} & t>t_*
  \end{array}
\end{equation}
where the exact expressions for the operator $Q_1$ and $Q_2$ are given by \eqref{eq:Q_1_b} and \eqref{eq:Q_2_b}, respectively.

\begin{figure}[!htb]
  \centering
  \includegraphics[width=.97\textwidth]{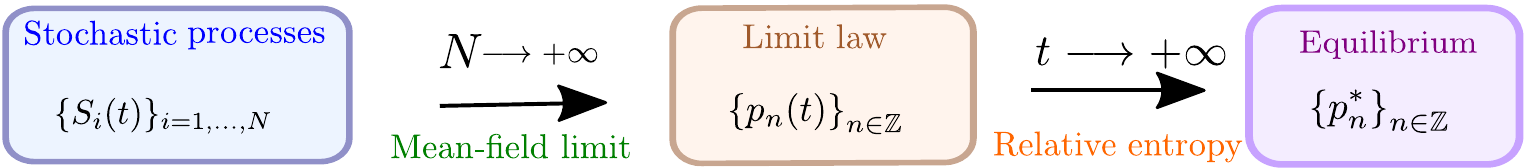}
\caption{Schematic illustration of the strategy of the limiting procedures.}
\label{fig:scheme_sketch}
\end{figure}

Our second step is to investigate the asymptotic behavior of the probability mass function ${\bf p}(t)$ as $t→+∞$, more precisely its convergence toward an asymmetric Laplace distribution given in \eqref{eq:Laplace}:
\begin{equation}\label{eq:Laplace}
\rho(x) = \begin{cases}
\rho_0\,\expo^{-\alpha\,x},&\quad \text{for}~ x \geq 0,\\
\rho_0\,\expo^{\beta\,x},&\quad \text{for}~ x \leq 0,
\end{cases}
\end{equation}
where the three positive parameters $\rho_0$, $\alpha$ and $\beta$ only depend on $μ$ (average wealth per agent) and $ν$ (the ratio between the bank and agent's combined wealth). Our proof relies on the so-called entropy method which gives a rigorous proof of the convergence of ${\bf p}(t)$ toward the asymmetric Laplace distribution $\rho$, see section \ref{subsec:3.2}. Moreover, a standard linearization analysis performed in section \ref{subsec:3.3} allows us to obtain an exponential decay result for the linearized entropy (near equilibrium), provided that the parameters $\mu$ and $\nu$ fulfill certain criteria.

Finally, we would like explore the role played by the bank in terms of wealth inequality. In particular, we conjecture that the inclusion of the bank and possibility for agents to go into debt will lead to accentuation of wealth inequality (measured by the Gini index), compared to the usual unbiased exchange model (without the presence of a bank). This type of Gini index comparison conjecture (shown numerically in section \ref{sec:Gini_comparison}) makes sense at least heuristically but we are not able to provide a rigorous proof.

Although we will only investigate a specific binary exchange models in the present work, other exchange rules can also be imposed and studied, leading to different models. To name a few, the so-called immediate exchange model introduced in \cite{heinsalu_kinetic_2014} assumes that pairs of agents are randomly and uniformly picked at each random time, and each of the agents transfer a random fraction of its money to the other agents, where these fractions are independent and uniformly distributed in $[0,1]$. The so-called uniform reshuffling model investigated in \cite{dragulescu_statistical_2000,lanchier_rigorous_2018,cao_entropy_2021} suggests that the total amount of money of two randomly and uniformly picked agents possess before interaction is uniformly redistributed among the two agents after interaction. For closely related variants of the unbiased exchange model we refer to the recent work \cite{cao_derivation_2021}. For other models arising from econophysics, see \cite{chakraborti_statistical_2000, chatterjee_pareto_2004, lanchier_rigorous_2018-1,cao_explicit_2021}.

\section{Formal mean-field limit}\label{sec:mean_field}
\setcounter{equation}{0}

In this section, we carry out a formal mean-field argument of the stochastic agent-based dynamics \eqref{unbiased_exchange_with_debt} as the number of agents $N$ goes to infinity. Even our analysis is not completely rigorous, the resulting system of ODEs (i.e., \eqref{eq:Q_1_a}-\eqref{eq:Q_1_b} in \text{\bf Phase I} and \eqref{eq:Q_2_a}-\eqref{eq:Q_2_b} in \text{\bf Phase II}) admits a unique equilibrium distribution as $t \to \infty$, which is a two-sided geometric distribution and can be well-approximated by an asymmetric Laplace distribution when $\mu \gg 1$ (i.e., when the initial average amount of dollars per agent becomes large). As our predication of the limiting distribution of money based on the large time behavior of the ODE system \eqref{eq:Q_2_a}-\eqref{eq:Q_2_b} coincides with the conjecture in \cite{xi_required_2005} and the work in \cite{lanchier_rigorous_2018-1}, it strongly indicates that our heuristic mean-field analysis actually captures the correct deterministic behavior of the underlying stochastic dynamics when we send $N \to \infty$.

The unbiased exchange model with collective debt limit can be written in terms of a system of stochastic differential equations, thanks to the framework set up in \cite{cao_derivation_2021} for the study of the basic unbiased exchange model as well as some of its variants. Introducing $\{\mathrm{N}_t^{(i,j)}\}_{1\leq i,j\leq N}$, which represents a collection of independent Poisson processes with constant intensity $\frac{\lambda}{N}$, the evolution of each $S_i$ is given by:
\begin{equation}
  \label{eq:SDE}
  \dd S_i = -\sum \limits^{N}_{j=1} \underbrace{\left(1-\mathbbm{1}_{(-\infty,0]}(S_i)\cdot \delta_0(B_c)\right) \dd \mathrm{N}^{(i,j)}_{t}}_{\text{``$i$ gives to $j$''}} + \sum \limits^{N}_{j=1} \underbrace{\left(1-\mathbbm{1}_{(-\infty,0]}(S_j)\cdot \delta_0(B_c)\right) \dd \mathrm{N}^{(j,i)}_{t}}_{\text{``$j$ gives to $i$''}},
\end{equation}
where we use the notation $\delta_0(B_c) := \mathbbm{1}\{B_c = 0\}$. By the obvious symmetry, we can focus on the case when $i=1$ and notice that whenever $B_c \geq 1$, the SDE for $S_1$ simplifies to
\begin{equation}
\label{eq:SDE_phase1}
\dd S_1 = -\sum \limits^{N}_{j=1} \dd \NN^{(1,j)}_t + \sum \limits^{N}_{j=1} \dd \NN^{(j,1)}_t
\end{equation}
If we introduce
\begin{displaymath}
  \mathrm{\bf N}^1_t = \sum_{j=1}^N \mathrm{N}^{(1,j)}_t,\quad \mathrm{\bf M}^1_t = \sum_{j=1}^N \mathrm{N}^{(j,1)}_t,
\end{displaymath}
then the two Poisson processes $\mathrm{\bf N}^1_t$ and $\mathrm{\bf M}^1_t$ are of intensity $λ := 1$.  Motivated by \eqref{eq:SDE_phase1}, we give the following definition of the limiting dynamics of $S_1(t)$ as $N \rightarrow \infty$ from the process point of view, providing that $B_c \geq 1$.

\begin{definition}[\textbf{Mean-field equation --- Phase \RN{1}}]
We define $\overbar{S}_1$ to be the compound Poisson process satisfying the following SDE:
\begin{equation}
\label{eq:SDE_phase1_limit}
\dd \overbar{S}_1 = -\dd \overbar{\mathrm{\bf N}}^1_t + \dd \overbar{\mathrm{\bf M}}^1_t,
\end{equation}
in which $\overbar{\mathrm{\bf N}}^1_t$ and $\overbar{\mathrm{\bf M}}^1_t$ are independent Poisson processes with unit intensity.
\end{definition}

We denote by ${\bf p}(t)=\left(\ldots,p_{-n}(t),\ldots,p_{-1}(t),p_0(t),p_1(t),\ldots,p_n(t),\ldots\right)$ the law of the process $\overbar{S}_1(t)$, i.e. $p_n(t) = \mathbb P\left[\overbar{S}_1(t) = n\right]$. Its time evolution is given by the following dynamics:
\begin{equation}
    \label{eq:Q_1_a}
\frac{\dd}{\dd t} {\bf p}(t) = Q_1[{\bf p}(t)]
\end{equation}
with
\begin{equation}
  \label{eq:Q_1_b}
  Q_1[{\bf p}]_n := p_{n+1} + p_{n-1} - 2\,p_n \qquad \text{for } n ∈ℤ.
\end{equation}

\begin{remark}\label{remark1}
It is readily seen that ${\bf p}(t)$ is exactly the law of a continuous-time symmetric simple random walk on $\mathbb Z$ defined via
\begin{equation}\label{eq:X_t}
X_t := X_0 + \sum_{i=1}^{N_t} Y_i
\end{equation}
with $Y_i \in \{-1,1\}$ being a sequence of independent Rademacher random signs and $N_t$ being a Poisson clock running at unit rate.
\end{remark}

The distribution ${\bf p}(t)$ naturally preserves its mass (since it is a probability mass function) and the mean value (as the total amount of money in the whole system is preserved), thus if we introduce the affine subspaces:
\begin{equation}\label{eq:probability_space}
\mathcal{S}_\mu = \{{\bf p} \mid \sum_{n \in \mathbb Z} p_n =1,~p_n \geq 0,~\sum_{n \in \mathbb Z} n\,p_n =\mu\}
\end{equation}
and \[\mathcal{S}^+_\mu = \{{\bf p} \in \mathcal{S}_\mu \mid p_n = 0~\textrm{for}~ n < 0\},\] then it is clear that the unique solution ${\bf p}(t)$ of \eqref{eq:Q_1_a}-\eqref{eq:Q_1_b} with ${\bf p}(0) \in \mathcal{S}^+_\mu$ satisfies ${\bf p}(t) \in \mathcal{S}_\mu$ for all $t > 0$. Moreover, if we define the average amount of ``debt'' per agent as
\begin{equation}
  \label{eq:debt}
  D[{\bf p}] = -\sum_{n \leq -1} n\,p_n,
\end{equation}
this quantity will be non-decreasing: $\frac{\dd}{\dd t} D[{\bf p}(t)] = p_0(t) \geq 0$. Since the average amount of debt each agent can sustain in the underlying stochastic $N$-agents system is at most $\mu\,\nu$, we therefore terminate the evolution of \text{\bf Phase I} \eqref{eq:Q_1_a}-\eqref{eq:Q_1_b} until $t = t_*$, where
\begin{equation}
\label{eq:t_*}
t_* = \min\limits_{t\geq 0} \{D[{\bf p}(t)] =  \mu\,\nu\}.
\end{equation}

\begin{remark}
Thanks to the previous remark, $\mathbb{E}|X_t| \xrightarrow{t \to \infty} +\infty$. By the identity $|X_t| = X^{+}_t + X^{-}_t$ and the obvious symmetry, we deduce that $D(t) = \mathbb{E}[X^{-}_t]$ is also unbounded as $t \to \infty$. This observation ensures the finiteness of the $t_*$.
\end{remark}

At the level of the agent-based system, after the first time when there is no cash in the bank, i.e., when $t \geq t^{\textrm{stoc}}_* := \min\{\tau > 0 \mid B_c(\tau) = 0\}$, the analysis is much more involved so heuristic reasoning plays a major role in this manuscript. We notice that we have the following basic relations for all time $t \geq 0$:
\begin{equation}\label{eq:evolution_Bc}
B_c = B_* - \sum_{i=1}^N S^{-}_i,~~ \dd B_c = -\sum_{i=1}^N \dd S^{-}_i,
\end{equation}
and $B_c \geq 0$. Therefore, the evolution of $B_c$ is much faster than the evolution of each of the $S_i$'s, indicating that \eqref{eq:evolution_Bc} is really a ``fast'' dynamics compared to \eqref{eq:SDE}. These observations motivate the next definition:

\begin{definition}[\textbf{Mean-field equation --- Phase \RN{2}}]
\label{def:phase2}
We define $\widetilde{S}_1$ (for $t \geq t^{\textrm{stoc}}_*$) to be the compound Poisson process satisfying the following SDE:
\begin{equation}
\label{eq:SDE_phase2_limit}
\dd \widetilde{S}_1 = -\left(1 - \mathbbm{1}_{(-\infty,0]}(\widetilde{S}_1)\cdot Z\right)\dd \widetilde{\mathrm{\bf N}}^1_t + \left(1 - (1-Y)\cdot Z\right)\dd \widetilde{\mathrm{\bf M}}^1_t,
\end{equation}
in which $\widetilde{\mathrm{\bf N}}^1_t$ and $\widetilde{\mathrm{\bf M}}^1_t$ are independent Poisson processes running at the unit intensity. Moreover, $Y = Y(t) \sim \mathcal{B}\left(r(t)\right)$ and $Z = Z(t) \sim \mathcal{B}\left(q_0(t)\right)$ are independent Bernoulli random variables with parameters
\begin{equation}\label{eq:r}
  r := \mathbb{P}\left[\widetilde{S}_1 \geq 1\right]
\end{equation}
and \begin{equation}\label{eq:q0}
  q_0 := \frac{\mathbb{P}\left[\widetilde{S}_1=0\right]}{\mathbb{P}\left[\widetilde{S}_1 \leq 0\right]\cdot \mathbb{P}\left[\widetilde{S}_1 \geq 0\right]}.
\end{equation}
respectively.
\end{definition}

We again denote by ${\bf p}(t)=\left(\ldots,p_{-n}(t),\ldots,p_{-1}(t),p_0(t),p_1(t),\ldots,p_n(t),\ldots\right)$ the law of the process $\widetilde{S}_1(t)$ for $t \geq t_*$, i.e. $p_n(t) = \mathbb P\left[\widetilde{S}_1(t) = n\right]$ for $t \geq t_*$. Its time evolution is given by the following dynamics:
\begin{equation}
  \label{eq:Q_2_a}
\frac{\dd}{\dd t} {\bf p}(t) = Q_2[{\bf p}(t)]
\end{equation}
with
\begin{equation}
  \label{eq:Q_2_b}
Q_2[{\bf p}]_n:= \left\{
    \begin{array}{ll}
      \tfrac{rd}{(r+p_0)(d+p_0)}\,p_{n+1} + \tfrac{r}{r+p_0}\,p_{n-1} - \left(\tfrac{rd}{(r+p_0)(d+p_0)} + \tfrac{r}{r+p_0}\right)\,p_n, &\quad  n \leq -1,\\
      p_1 + \tfrac{r}{r+p_0}\,p_{-1} - \left(\tfrac{rd}{(r+p_0)(d+p_0)} + \tfrac{r}{r+p_0}\right)\,p_0, & \quad n= 0, \\
      p_{n+1} + \tfrac{r}{r+p_0}\,p_{n-1}- \left(1+\tfrac{r}{r+p_0}\right)\,p_n, & \quad n \geq 1,
    \end{array}
  \right.
\end{equation}
where
\begin{equation}
  \label{eq:Q_2_c}
  r := \sum_{n\geq 1} p_n \quad \text{and} \quad d := \sum_{n\leq -1} p_n
\end{equation}
represent the proportion of ``rich'' and ``debt'' agents, respectively.

\begin{remark}
We will illustrate the basic intuition behind Definition \ref{def:phase2}. Under the large population limit $N \to \infty$, we denote by ${\bf q}(t) = (q_0(t),q_1(t),\ldots)$ the law of $B_c(t)$. It is easily seen that the transition rates of $B_c$ can be described by figure \ref{fig:evolution_bank} below.

\begin{figure}[!htb]
\centering
\includegraphics[width=.7\textwidth]{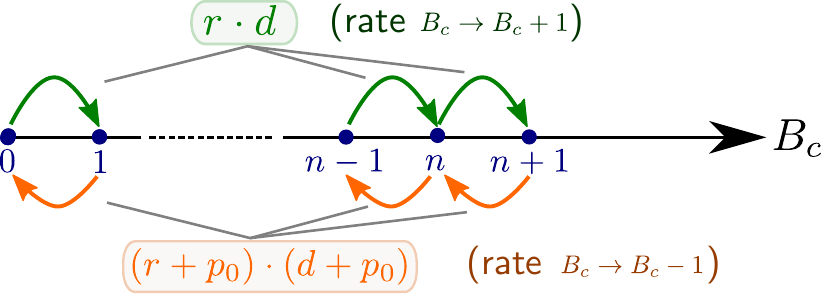}
\caption{Schematic illustration of the evolution of $B_c$ (amount of cash in the bank).}
\label{fig:evolution_bank}
\end{figure}

That is, the transition $B_c \to B_c + 1$ occurs when a ``rich'' agent is being picked ($S_i≥1$) and gives a dollar to an agent in  ``debt'' ($S_j<0$). Similarly, the transition $B_c \to B_c - 1$ happens when an agent without dollar is being picked ($S_i≤0$) and gives a dollar to an agent with debt ($S_j≥0$) and moreover $B_c \geq 1$. As the evolution of $B_c$ is a ``fast'' dynamics, we assume its distribution will relax to its ergodic invariant distribution within a time-scale that is negligible compared to the evolution of each of the $S_i$'s. Thus, from the following detailed balance equation at equilibrium
\[q_n\cdot r\cdot d = q_{n+1}\cdot (r+p_0)\cdot (d + p_0),\] one arrives at $q_n = \left(\frac{r\,d}{(r+p_0)\,(d+p_0)}\right)^n q_0$ for all $n\geq 0$. Taking into account that $\sum_{n \geq 0} q_n = 1$, we end up with
\[q_0 = \frac{p_0}{(r+p_0)\,(d+p_0)},\] which is exactly \eqref{eq:q0}. 
On the other hand, the transition rates of $\widetilde{S}_1$ can be described by figure \ref{fig:evolution_tildeS} below.

\begin{figure}[!htb]
  \centering
  \includegraphics[width=.97\textwidth]{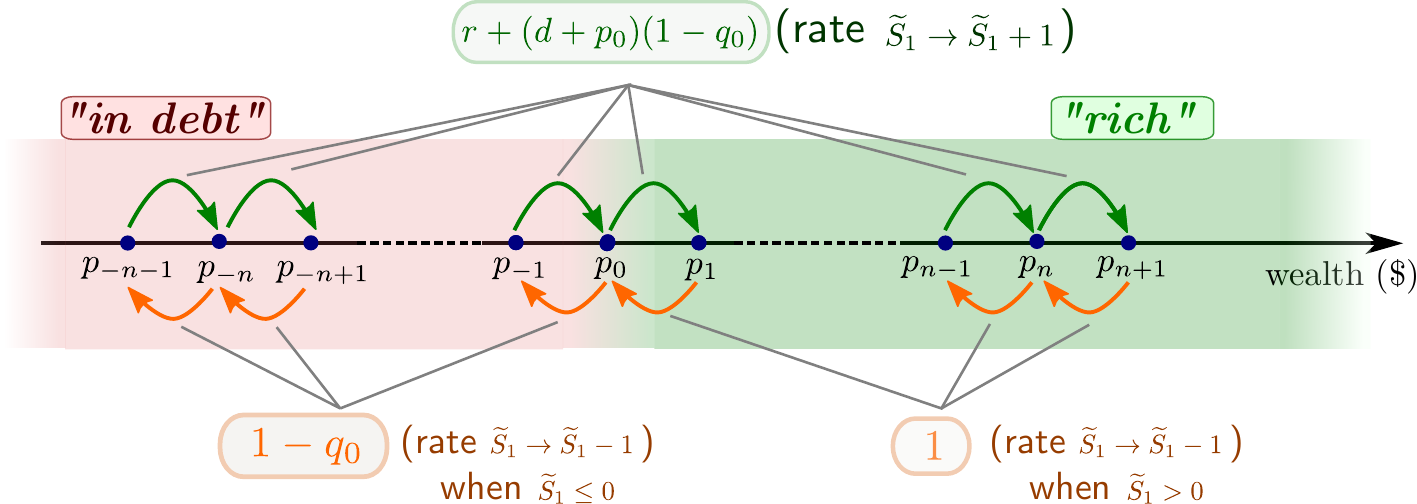}
\caption{Schematic illustration of the evolution of $\widetilde{S}_1$ in \text{\bf Phase II}.}
\label{fig:evolution_tildeS}
\end{figure}

This suggests that the evolution of ${\bf p}(t)$ should obey
\begin{equation}\label{eq:Qtilde}
\frac{\dd}{\dd t} {\bf p}(t) = \widetilde{Q}[{\bf p}(t)],
\end{equation}
with
\begin{equation}\label{eq:Qtilde_expression}
 \widetilde{Q}[{\bf p}]_n= \left\{
    \begin{array}{cclll}
      (1-q_0)(p_{n+1}-p_n)  &-& γ(p_{n-1}-p_n), & \quad n \leq -1,\\
      p_1-p_0  &-& γ(p_{-1}-p_0), & \quad n= 0, \\
      p_{n+1}-p_n  &-& γ(p_{n-1}-p_n), & \quad n \geq 1.
    \end{array}
  \right.
\end{equation}
with $γ=r + (d+p_0)(1-q_0)$. If we insert $q_0 = \frac{p_0}{(r+p_0)\,(d+p_0)}$, then the system \eqref{eq:Qtilde}-\eqref{eq:Qtilde_expression} coincides with \eqref{eq:Q_2_a}-\eqref{eq:Q_2_b}.
\end{remark}

\section{Large time behavior}\label{sec:large_time_limit}
\setcounter{equation}{0}

Although our mean-field analysis is not completely rigorous, we will soon see that our system of ODEs (i.e., \eqref{eq:Q_1_a}-\eqref{eq:Q_1_b} for $t \leq t_*$ and \eqref{eq:Q_2_a}-\eqref{eq:Q_2_b} for $t \geq t_*$) converges to a double-geometric distribution as $t \to \infty$, which resembles an asymmetric Laplace distribution when $\mu$ becomes large. Taking into account the conjecture in \cite{xi_required_2005} and the work of \cite{lanchier_rigorous_2018-1}, as well as several numerical experiments, we believe that the two phase dynamics \eqref{eq:evolution_p} accurately captures the mean-field behavior of the stochastic $N$-agents dynamics as $N \to \infty$.

\subsection{Elementary properties of the limit equation}\label{subsec:3.1}

In \text{\bf Phase II}, the collective debt $d$ \eqref{eq:Q_2_c} is preserved, therefore we can further restrict the affine space where the solution ${\bf p}(t)$ of \eqref{eq:Q_2_a}-\eqref{eq:Q_2_b} lives:
\begin{equation}\label{eq:SUV}
\mathcal{S}_{\mu,\nu} = \{{\bf p} \mid \sum_{n \in \mathbb Z} p_n =1,~p_n \geq 0,~\sum_{n \in \mathbb Z} n\,p_n =\mu,~-\sum_{n \leq 0} n\,p_n = \mu\,\nu\},
\end{equation}
it is straightforward to check that if ${\bf p}(t_*) \in \mathcal{S}_{\mu,\nu}$ then ${\bf p}(t) \in \mathcal{S}_{\mu,\nu}$ for all $t \geq t_*$. Now, we identify the unique equilibrium solution associated with \eqref{eq:Q_2_a}-\eqref{eq:Q_2_b} in this space.

\begin{proposition}\label{prop:equilibrium}
The unique equilibrium solution of \eqref{eq:Q_2_a}-\eqref{eq:Q_2_b} in $\mathcal{S}_{\mu,\nu}$, denoted by ${\bf p}^* = \{p^*_n\}_{n \in \mathbb Z}$, is given by
\begin{equation}\label{eq:doubl-geometric}
p^*_n = \left(\frac{r^*}{r^* + p^*_0}\right)^n p^*_0,~~n \geq 0, \quad p^*_n = \left(\frac{d^*}{d^* + p^*_0}\right)^{-n} p^*_0,~~n \leq 0,
\end{equation}
in which \begin{equation}\label{eq:p*0}
p^*_0 = \left\{
    \begin{array}{ll}
      1/(4\,\nu + 2), & \quad \text{if}~ \mu = 1,\\
      \frac{\mu(\nu+ \frac 12) \,-\, \sqrt{\mu^2\nu^2 + \mu^2\nu + \frac 14}}{(\mu^2-1)/2}, & \quad \text{if}~ \mu \neq 1,
    \end{array}
  \right.
\end{equation}
\begin{equation}\label{eq:r*d*}
r^* = \frac{(\mu-1)\,p^*_0 + 1}{2},~~\textrm{and}~~ d^* = 1 - p^*_0 - r^*.
\end{equation}
\end{proposition}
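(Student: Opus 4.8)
The plan is in two stages: first read off the functional \emph{form} of any equilibrium from the stationary equation $Q_2[{\bf p}^*]=0$, and then fix the three scalars $p^*_0,r^*,d^*$ by the affine constraints that cut out $\mathcal S_{\mu,\nu}$. For the first stage I would observe that, reading off the coefficients in \eqref{eq:Q_2_b}, the operator $Q_2$ is the generator of a nearest-neighbour birth--death chain on $\mathbb Z$, with up-rate $a_n=\tfrac{r}{r+p_0}$ for every $n$ and down-rate $b_n=1$ for $n\ge 1$, $b_n=\tfrac{rd}{(r+p_0)(d+p_0)}$ for $n\le 0$; that is, $Q_2[{\bf p}]_n=a_{n-1}p_{n-1}+b_{n+1}p_{n+1}-(a_n+b_n)p_n$. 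Summing the stationary equations $Q_2[{\bf p}^*]_k=0$ over all $k\le n$ telescopes, and the boundary term at $-\infty$ vanishes because $p^*_k\to0$ (here I use that ${\bf p}^*\in\mathcal S_{\mu,\nu}$ is a genuine probability vector), leaving the detailed-balance identity $a_np^*_n=b_{n+1}p^*_{n+1}$ for all $n$. With $r^*:=\sum_{n\ge1}p^*_n$ and $d^*:=\sum_{n\le-1}p^*_n$ treated as constants, this reads $p^*_{n+1}=\tfrac{r^*}{r^*+p^*_0}p^*_n$ for $n\ge0$ and $p^*_{n-1}=\tfrac{d^*}{d^*+p^*_0}p^*_n$ for $n\le0$, which is exactly \eqref{eq:doubl-geometric}. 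One rules out $p^*_0=0$ quickly (otherwise $Q_2[{\bf p}^*]=0$ forces ${\bf p}^*\equiv0$, impossible in $\mathcal S_{\mu,\nu}$), and since $r^*,d^*\ge0$ the two geometric ratios lie in $[0,1)$, so the resulting vector is automatically summable. A one-line substitution then confirms that, conversely, every such double-geometric vector solves $Q_2[{\bf p}^*]=0$ for any $p^*_0>0$, $r^*,d^*\ge 0$, and that the bookkeeping identities $\sum_{n\ge1}p^*_n=r^*$ and $\sum_{n\le-1}p^*_n=d^*$ hold identically --- so they impose no extra restriction.

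For the second stage I would impose the affine constraints. Summing the geometric series gives $\sum_{n\ge0}p^*_n=r^*+p^*_0$, $\sum_{n\ge1}np^*_n=r^*(r^*+p^*_0)/p^*_0$ and $-\sum_{n\le-1}np^*_n=d^*(d^*+p^*_0)/p^*_0$, so membership ${\bf p}^*\in\mathcal S_{\mu,\nu}$ becomes the three equations $r^*+d^*+p^*_0=1$, $\ r^*(r^*+p^*_0)-d^*(d^*+p^*_0)=\mu\,p^*_0$ and $\ d^*(d^*+p^*_0)=\mu\nu\,p^*_0$. Subtracting the last two and using the first collapses the mean equation to $r^*-d^*=\mu\,p^*_0$, whence $r^*=\tfrac12\bigl(1+(\mu-1)p^*_0\bigr)$ and $d^*=1-p^*_0-r^*$, i.e. \eqref{eq:r*d*}. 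Substituting $d^*$ and $d^*+p^*_0=\tfrac12\bigl(1-(\mu-1)p^*_0\bigr)$ into the debt equation and expanding produces the single scalar equation $(\mu^2-1)(p^*_0)^2-2\mu(1+2\nu)p^*_0+1=0$; when $\mu=1$ this is linear and gives $p^*_0=1/(4\nu+2)$, and when $\mu\neq1$ the quadratic formula together with the simplifications $\mu^2(1+2\nu)^2-(\mu^2-1)=4\bigl(\mu^2\nu^2+\mu^2\nu+\tfrac14\bigr)$ and $2\mu(1+2\nu)=4\mu(\nu+\tfrac12)$ reproduces \eqref{eq:p*0}.

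It then remains to select the correct root and conclude uniqueness. Writing $f(x)=(\mu^2-1)x^2-2\mu(1+2\nu)x+1$, one has $f(0)=1>0$ and $f\bigl(\tfrac1{\mu+1}\bigr)=-\tfrac{4\mu\nu}{\mu+1}\le0$, so there is a unique root of $f$ in $\bigl(0,\tfrac1{\mu+1}\bigr]$; inspecting the sum and product of the roots (same sign when $\mu>1$, opposite signs when $\mu<1$) shows the other root is either $>\tfrac1{\mu+1}$ or negative, hence the admissible value is the smaller one, which is the ``$-$'' branch in \eqref{eq:p*0}. For this $p^*_0$ one checks $p^*_0\in(0,\tfrac1{\mu+1}]$, so $d^*=\tfrac12\bigl(1-(\mu+1)p^*_0\bigr)\ge0$ and $r^*=1-p^*_0-d^*>0$, the ratios $\tfrac{r^*}{r^*+p^*_0},\ \tfrac{d^*}{d^*+p^*_0}$ lie in $[0,1)$, and the resulting ${\bf p}^*$ indeed lies in $\mathcal S_{\mu,\nu}$. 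Since every equilibrium in $\mathcal S_{\mu,\nu}$ was forced into this form with these parameter values, this yields simultaneously existence and uniqueness.

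I expect the only mildly delicate point to be this last step: discarding the spurious root and verifying the positivity of $p^*_0$, $r^*$ and $d^*$ uniformly across the regimes $\mu<1$, $\mu=1$, $\mu>1$ (and the degenerate case $\nu=0$). Everything else is a direct computation, the single structural observation being that $Q_2$ is a birth--death generator, which makes the double-geometric shape --- and the uniqueness of the stationary law for fixed $(p^*_0,r^*,d^*)$ --- automatic through detailed balance.
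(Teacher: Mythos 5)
Your proposal is correct and follows essentially the same route as the paper: identify the double-geometric form from the stationary equation (the paper simply asserts this as "straightforward to check," where you make it precise via the birth--death/detailed-balance structure), then solve for $p^*_0$, $r^*$, $d^*$ from the constraints $r^*+p^*_0+d^*=1$, $r^*(r^*+p^*_0)=p^*_0\,\mu(1+\nu)$ and $d^*(d^*+p^*_0)=p^*_0\,\mu\nu$. Your quadratic for $p^*_0$, the root selection, and the positivity checks are accurate fillings-in of steps the paper's proof leaves implicit.
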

\begin{proof}
From the evolution equation in \text{\bf Phase II} \eqref{eq:Q_2_a}-\eqref{eq:Q_2_b}, it is straightforward to check that the equilibrium distribution takes the ascertained form \eqref{eq:doubl-geometric}. As we also have ${\bf p}^* \in \mathcal{S}_{\mu,\nu}$, we impose that
\begin{equation*}
\sum_{n\in \mathbb Z} n\,p^*_n = \mu,~~\text{and}~~ -\sum_{n \leq 0} n\,p^*_n = \mu\,\nu.
\end{equation*}
These constraints lead us to
\begin{equation*}
r^*\,(r^* + p^*_0) = p^*_0\,\left(\mu\,\nu + \mu\right),~~\text{and}~~ d^*\,(d^* + p^*_0) = p^*_0\,\mu\,\nu.
\end{equation*}
Combining these relations with the elementary identity $r^* + p^*_0 + d^* = 1$, we obtain the desired result.
\end{proof}

\begin{remark}
Under the settings of Proposition \ref{prop:equilibrium}, if we take $\mu \gg 1$ while keeping $\nu$ fixed, the two-sided geometric distribution ${\bf p}^*$ \eqref{eq:doubl-geometric} can be well-approximated by the asymmetric Laplace distribution given by \eqref{eq:Laplace} and conjectured in \cite{lanchier_rigorous_2018-1,xi_required_2005}. Furthermore, we have an estimation of the parameters $ρ_0, \,α, \,β$ of this distribution with
 \begin{equation}\label{eq:parameters}
   \rho_0 \sim \frac{1}{\mu}\left(\sqrt{1+\nu}-\sqrt{\nu}\right)^2,~~ \alpha \sim \frac{1}{\mu}\left(1 - \sqrt{\frac{\nu}{1+\nu}}\right), ~~ \beta \sim \frac{1}{\mu}\left(\sqrt{\frac{1+\nu}{\nu}} - 1\right).
\end{equation}
Indeed, by taking $\mu \gg 1$ in \eqref{eq:p*0}, we can perform the following simple asymptotic analysis for $p^*_0$:
\begin{align*}
p^*_0 &= \frac{\mu(\nu+ \frac 12) \,-\, \sqrt{\mu^2\nu^2 + \mu^2\nu + \frac 14}}{(\mu^2-1)/2} = \frac{1}{2\left(\mu(\nu + \frac 12) + \sqrt{\mu^2\nu^2 + \mu^2\nu + \frac 14} \right)} \\
&\sim \frac{1}{\mu}\cdot \frac{1}{1+2\,\nu + 2\,\sqrt{\nu^2 + \nu}} = \frac{1}{\mu}\cdot \left(\frac{1}{\sqrt{1 + \nu} + \sqrt{\nu}}\right)^2 = \frac{1}{\mu}\cdot \left(\sqrt{1+\nu}-\sqrt{\nu}\right)^2.
\end{align*}
Similarly, the asymptotic behavior of $\alpha$ can be seem from the asymptotic analysis for $\log\left(1 \big\slash\frac{r^*}{r^* + p^*_0}\right)$. Thanks to \eqref{eq:p*0}, \eqref{eq:r*d*} and the asymptotic argument for $p^*_0$ above, we have
\begin{align*}
\log\left(1 \big\slash\frac{r^*}{r^* + p^*_0}\right) &= \log\left(\frac{(\mu +1)\,p^*_0 + 1}{(\mu -1)\,p^*_0 + 1}\right) \sim \log\left(\frac{\frac{(\mu +1)}{\mu}\left(\sqrt{1+\nu}-\sqrt{\mu}\right)^2 +1}{\frac{(\mu -1)}{\mu}\left(\sqrt{1+\nu}-\sqrt{\mu}\right)^2 +1}
\right) \\
&= \log\left(\frac{1 + \left(\sqrt{1+\nu}-\sqrt{\nu}\right)^2 + \frac{\left(\sqrt{1+\nu}-\sqrt{\nu}\right)^2}{\mu}}{1 + \left(\sqrt{1+\nu}-\sqrt{\nu}\right)^2 - \frac{\left(\sqrt{1+\nu}-\sqrt{\nu}\right)^2}{\mu}}\right) \\
&\sim 2\,\frac{\frac{\left(\sqrt{1+\nu}-\sqrt{\nu}\right)^2}{\mu}}{1 + \left(\sqrt{1+\nu}-\sqrt{\nu}\right)^2} \\
&= \frac{1}{\mu}\cdot \frac{\left(\sqrt{1+\nu}-\sqrt{\nu}\right)^2}{1 + \nu - \sqrt{\nu\,(1+\nu)}} = \frac{1}{\mu}\cdot\left(1 - \sqrt{\frac{\nu}{1+\nu}}\right).
\end{align*}
The argument for $\beta$ is pretty similar and hence will be omitted here.
\end{remark}

\subsection{Convergence to asymmetric Laplace distribution}\label{subsec:3.2}

The main goal of this section is to prove the convergence of the solution ${\bf p}(t)$ in \text{\bf Phase II} \eqref{eq:Q_2_a}-\eqref{eq:Q_2_b} to its two-sided geometric equilibrium solution ${\bf p}^*$ as $t \to \infty$. The essence of the method consists in studying the (relative) entropy dissipation between the two distributions. We remind the formula for estimating the relative entropy:
\begin{equation}
  \label{eq:KL}
  \DD_{\KL}\left({\bf p}~||~ {\bf q}\right) = \sum\limits_{n\in \mathbb Z} p_n\,\log \left(\frac{p_n}{q_n}\right).
\end{equation}

\begin{theorem}\label{thm:convergence}
Let ${\bf p}(t) = \{p_n(t)\}_{t \geq t_*}$ be the unique solution to \eqref{eq:Q_2_a}-\eqref{eq:Q_2_b} with ${\bf p}(t_*) \in \mathcal{S}_{\mu,\nu}$, then for all $t \geq t_*$ we have
\begin{equation}\label{eq:enropy_dissipation}
\begin{aligned}
\frac{\dd}{\dd t}\, \DD_{\KL}\left({\bf p}(t)~||~ {\bf p}^*\right) &= -\sum\limits_{n\geq 0}r\,\left(\frac{p_{n+1}}{r} - \frac{p_n}{r+p_0}\right)\log\frac{p_{n+1}/r}{p_n/(r+p_0)} \\
&\quad - \sum\limits_{n\leq -1} \frac{r\,d}{r+p_0}\,\left(\frac{p_{n+1}}{d+p_0} - \frac{p_n}{d}\right)\log\frac{p_{n+1}/(d+p_0)}{p_n/d} \leq 0.
\end{aligned}
\end{equation}
Consequently, if $\sum_{n \geq 0} p_n(t_*)\,c^n + \sum_{n \leq 0} p_n(t_*)\,c^{-n} < \infty$ for some $c >1$, then the solution ${\bf p}(t)$ converges strongly in $\ell^p$ for $1 < p < \infty$ as $t \to \infty$ to the two-sided geometric distribution ${\bf p}^*$.
\end{theorem}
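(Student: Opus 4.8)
The plan is to use the relative entropy $H(t) := \DD_{\KL}({\bf p}(t)\,||\,{\bf p}^*)$ as a Lyapunov functional, following the standard entropy method for master-type equations. First I would verify the dissipation identity \eqref{eq:enropy_dissipation}. Differentiating $H(t) = \sum_n p_n \log(p_n/p^*_n)$ and using $\frac{\dd}{\dd t}\sum_n p_n = 0$ gives $\frac{\dd}{\dd t} H = \sum_n (\log p_n - \log p^*_n)\,Q_2[{\bf p}]_n$. The key is to rewrite $Q_2$ in divergence (discrete flux) form: for the branch $n \geq 0$ the flux across the edge $(n,n+1)$ is $J_n = p_{n+1} - \frac{r}{r+p_0}p_n$, i.e. $Q_2[{\bf p}]_n = J_n - J_{n-1}$ up to the boundary bookkeeping at $n=0$, and similarly for $n \leq -1$ with flux involving the factor $\frac{rd}{(r+p_0)(d+p_0)}$ and $\frac{r}{r+p_0}$. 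An Abel summation (summation by parts) then converts $\sum_n (\log p_n - \log p^*_n)(J_n - J_{n-1})$ into $-\sum_n J_n \big((\log p_{n+1} - \log p^*_{n+1}) - (\log p_n - \log p^*_n)\big)$. Since ${\bf p}^*$ satisfies the detailed-balance relations $p^*_{n+1}/p^*_n = \frac{r}{r+p_0}$ (for $n\geq 0$) and the analogous ratio for $n \leq -1$ — note $r,d,p_0$ in the flux coefficients are the \emph{time-dependent} moments of ${\bf p}(t)$, not of ${\bf p}^*$, so one must be slightly careful: the coefficients match ${\bf p}^*$'s ratios only at equilibrium — the cross terms combine so that each summand has the form $-a(x-y)\log(x/y)$ with $a > 0$, which is $\leq 0$ because $(x-y)\log(x/y)\geq 0$ always. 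This yields \eqref{eq:enropy_dissipation}, and in particular $H$ is non-increasing and the dissipation vanishes iff every flux $J_n$ vanishes, i.e. iff ${\bf p}(t) = {\bf p}^*$ (using that ${\bf p}(t)\in\mathcal S_{\mu,\nu}$ pins down the equilibrium uniquely by Proposition \ref{prop:equilibrium}).

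For the convergence conclusion, the first step is propagation of the exponential moment bound: if $M_c(t) := \sum_{n\geq 0}p_n c^n + \sum_{n\leq 0}p_n c^{-n} < \infty$ at $t=t_*$ for some $c>1$, I would show $M_c(t)$ stays bounded (in fact one can likely show $\sup_{t\geq t_*}M_{c'}(t)<\infty$ for some $1<c'\leq c$) by a Grönwall argument: applying $Q_2$ to the test sequence $n\mapsto c^{|n|}$ and using that the coefficients $\frac{r}{r+p_0},\frac{rd}{(r+p_0)(d+p_0)}$ are bounded by $1$, one gets $\frac{\dd}{\dd t}M_c \leq C\, M_c$ for a constant depending only on $c$. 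This uniform-in-time moment bound gives tightness of $\{{\bf p}(t)\}$ in $\ell^p$ and, crucially, upgrades weak/pointwise convergence to strong $\ell^p$ convergence for $1<p<\infty$ (dominate the tails uniformly by the summable sequence $c^{-|n|}M_c$). Next, boundedness of $H(t)$ from below by $0$ plus monotonicity gives $H(t)\downarrow H_\infty \geq 0$ and $\int_{t_*}^\infty D(s)\,ds<\infty$ where $D$ is the (nonnegative) dissipation; hence $D(t_k)\to 0$ along a subsequence. Combined with tightness, extract a subsequential limit ${\bf p}(t_k)\to{\bf q}$ in $\ell^p$; by continuity of the (bounded, Lipschitz on the moment-bounded set) functionals $r,d,p_0$ and of $D$, the limit ${\bf q}$ has zero dissipation, so ${\bf q}={\bf p}^*$. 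Finally a LaSalle-type argument (or lower semicontinuity of $H$ together with $H(t)\to H_\infty$) forces $H_\infty = H({\bf p}^*) = 0$, hence $H(t)\to 0$, and by the Csiszár–Kullback–Pinsker inequality (plus the moment bound to control the positivity/tail issues inherent to an infinite state space) ${\bf p}(t)\to{\bf p}^*$ in $\ell^1$, which together with the uniform $\ell^\infty$-type tail bound gives the claimed strong $\ell^p$ convergence for all $1<p<\infty$.

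I expect the main obstacle to be the rigorous handling of the \emph{time-dependent coefficients} in $Q_2$ throughout the argument: the operator is genuinely nonlinear (through $r,d,p_0$), so (i) in the dissipation computation one must check that the summation-by-parts boundary terms at $n=0$ and at $\pm\infty$ genuinely vanish — the latter requiring a mild a priori decay of ${\bf p}(t)$, which is why the exponential-moment hypothesis is invoked — and (ii) in the limiting/LaSalle step one needs that $r(t),d(t),p_0(t)$ do not degenerate (e.g. $p_0(t)$ bounded away from $0$) so that the dissipation functional is continuous and coercive near equilibrium; establishing such uniform lower bounds on $p_0(t)$ from the dynamics, or circumventing them, is the delicate point. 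A secondary technical nuisance is that strong $\ell^p$ convergence on $\mathbb Z$ is not automatic from $H\to 0$ alone — CKP gives only $\ell^1$ — so the uniform exponential moment really is doing essential work in the tail control, and I would make sure the Grönwall constant for $M_c$ is genuinely independent of $t$.
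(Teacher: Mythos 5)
Your proof of the dissipation identity is essentially the paper's argument in different clothing: the paper differentiates $\DD_{\KL}({\bf p}\,||\,{\bf p}^*)$ by splitting it into the entropy term $\sum_n p_n\log p_n$ and the cross-entropy term $\sum_n p_n\log p^*_n$, shows the latter has zero time derivative, and reads \eqref{eq:enropy_dissipation} off the former; your flux/summation-by-parts computation is the same calculation. One step you left vague should be made precise: after summation by parts the increments of $\log p^*_n$ do \emph{not} cancel termwise against the time-dependent coefficients $r,d,p_0$ (as you note, they match the equilibrium ratios only at equilibrium); they disappear only after summing over each half-line, because $\log p^*_n$ is affine in $n$ on $\{n\geq 0\}$ and on $\{n\leq 0\}$ while the half-line flux sums vanish, $\sum_{n\geq 0}\bigl(p_{n+1}-\tfrac{r}{r+p_0}p_n\bigr)=0$ and $\sum_{n\leq -1}\bigl(\tfrac{rd}{(r+p_0)(d+p_0)}p_{n+1}-\tfrac{r}{r+p_0}p_n\bigr)=0$, which is exactly the conservation of mass, mean and debt that the paper uses to show $\frac{\dd}{\dd t}\sum_n p_n\log p^*_n=0$. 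Spelling this out closes the ``cross terms combine'' step; only then is each remaining summand of the form $-a(x-y)\log(x/y)\leq 0$.

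For the convergence statement the paper gives no proof at all (it refers to the analogous $\nu=0$ arguments in \cite{cao_derivation_2021,merle_cutoff_2019}), so your sketch goes beyond the text, but it has one genuine gap: the moment propagation you invoke does not deliver what the later steps use. The Gr\"onwall inequality $\frac{\dd}{\dd t}M_c\leq C\,M_c$ only gives $M_c(t)\leq M_c(t_*)\,\expo^{C(t-t_*)}$, a bound growing exponentially in time, whereas your tightness claim, the passage of the constraints $\sum_n n\,p_n=\mu$ and $-\sum_{n\leq 0}n\,p_n=\mu\nu$ to the subsequential limit, and hence the identification of the zero-dissipation limit with ${\bf p}^*$ via Proposition \ref{prop:equilibrium}, all require a bound uniform in $t$. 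Obtaining a uniform exponential-moment bound needs the dissipative structure of $Q_2$ (the jump-rate ratios being $<1$, with quantitative non-degeneracy of $p_0(t)$, $r(t)$, $d(t)$), which is precisely the delicate point you flag but do not resolve. A minor point in your favor: the worry that Csisz\'ar--Kullback--Pinsker ``only'' gives $\ell^1$ is unfounded, since $\|\cdot\|_{\ell^p}\leq\|\cdot\|_{\ell^1}$ on sequences, so $\ell^1$ convergence already implies strong $\ell^p$ convergence for every $p>1$; the exponential moment is needed for the LaSalle/limit-identification step, not for upgrading the norm.
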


\begin{proof}
We split the computation of \[\frac{\dd}{\dd t}\, \DD_{\KL}\left({\bf p}(t)~||~ {\bf p}^*\right) = \frac{\dd}{\dd t}\, \sum\limits_{n\in \mathbb Z} p_n\,\log p_n -  \frac{\dd}{\dd t}\, \sum\limits_{n\in \mathbb Z} p_n\,\log p^*_n \] into two parts. On the one hand, we have
\begin{equation*}
\frac{\dd}{\dd t} \sum\limits_{n\in \mathbb Z} p_n\,\log p^*_n = \sum_{n\geq 1} p'_n\,\log p^*_n + p'_0\,\log p^*_0 + \sum_{n\leq -1} p'_n\,\log p^*_n := \RN{1} + \RN{2} + \RN{3},
\end{equation*}
where \begin{equation}\label{RN1}
\begin{aligned}
\RN{1} &= \sum_{n\geq 1} p_{n+1}\,\left(\log p^*_0 + n\,\log\frac{r^*}{r^* + p^*_0}\right) + \sum_{n\geq 1} \frac{r}{r+p_0}\,p_{n-1}\,\left(\log p^*_0 + n\,\log\frac{r^*}{r^* + p^*_0}\right) \\
&\qquad - \sum_{n\geq 1} \left(1+\frac{r}{r+p_0}\right)\,p_n\,\left(\log p^*_0 + n\,\log\frac{r^*}{r^* + p^*_0}\right),
\end{aligned}
\end{equation}
\begin{equation}\label{RN2}
\RN{2} = \left(p_1 + \frac{r}{r + p_0}\,p_{-1} - \left(\frac{r\,d}{(r+p_0)\,(d+p_0)} + \frac{r}{r+p_0}\right)\,p_0 \right)\,\log p^*_0,
\end{equation}
and \begin{equation}\label{RN3}
\begin{aligned}
\RN{3} &= \sum_{n\leq -1} \frac{r\,d}{(r+p_0)\,(d+p_0)}\,p_{n+1}\,\left(\log p^*_0 + n\,\log\frac{r^*}{r^* + p^*_0}\right) \\
&\quad+ \sum_{n\leq -1} \frac{r}{r+p_0}\,p_{n-1}\,\left(\log p^*_0 + n\,\log\frac{r^*}{r^* + p^*_0}\right) \\
&\quad - \sum_{n\geq 1} \left(\frac{r\,d}{(r+p_0)\,(d+p_0)}+\frac{r}{r+p_0}\right)\,p_n\,\left(\log p^*_0 + n\,\log\frac{r^*}{r^* + p^*_0}\right).
\end{aligned}
\end{equation}
Assembling \eqref{RN1}-\eqref{RN3}, we end up with
\begin{equation}\label{eq:cross_entropy}
\frac{\dd}{\dd t} \sum\limits_{n\in \mathbb Z} p_n\,\log p^*_n = 0.
\end{equation}
On the other hand, straightforward computations lead us to
\begin{equation}\label{eq:entropy}
\begin{aligned}
\frac{\dd}{\dd t} \sum\limits_{n\in \mathbb Z} p_n\,\log p_n &= -\sum\limits_{n\geq 0} \left(p_{n+1}-\frac{r}{r+p_0}\,p_n\right)\,\log\frac{p_{n+1}}{\frac{r}{r+p_0}\,p_n} \\
&\quad - \sum\limits_{n\leq 0} \frac{r}{r+p_0}\,\left(\frac{d}{d+p_0}\,p_{n+1} - p_n\right)\,\log\frac{\frac{d}{d+p_0}\,p_{n+1}}{p_n} \\
&= -\sum\limits_{n\geq 0}r\,\left(\frac{p_{n+1}}{r} - \frac{p_n}{r+p_0}\right)\log\frac{p_{n+1}/r}{p_n/(r+p_0)} \\
&\quad - \sum\limits_{n\leq -1} \frac{r\,d}{r+p_0}\,\left(\frac{p_{n+1}}{d+p_0} - \frac{p_n}{d}\right)\log\frac{p_{n+1}/(d+p_0)}{p_n/d}. 
\end{aligned}
\end{equation}
Combining \eqref{eq:cross_entropy} and \eqref{eq:entropy} gives rise to the advertised entropy dissipation result \eqref{eq:enropy_dissipation}. The last statement of Theorem \ref{thm:convergence} can be handled in a pretty similar way as in the case for the basic unbiased exchange model (without the presence of a bank), see for instance \cite{cao_derivation_2021} or \cite{merle_cutoff_2019}.
\end{proof}

To illustrate the convergence of ${\bf p}(t)$ toward the equilibrium ${\bf p}^*$, we run a simulation and plot the evolution of ${\bf p}(t)$ at different times (see figure \ref{fig:cv_equilibrium}-left) as well as the evolution of the relative entropy $\DD_{\KL}\left({\bf p}(t)~||~ {\bf p}^*\right)$ over time (see figure \ref{fig:cv_equilibrium}-right). Notice that the decay of the relative entropy changes abruptly around $t\approx 200$ which corresponds to the transition of the dynamics from phase I to phase II. During the phase II, the decay of the relative entropy is well-approximated by a decay of the form:
\begin{equation}
  \label{eq:decay_D_KL}
  \DD_{\KL}\left({\bf p}(t)~||~ {\bf p}^*\right) \approx c_1 \exp(-c_2 \sqrt{t}),
\end{equation}
where the coefficients $c_1=.674$ and $c_2=.182$ are estimated using mean-square error criteria. We emphasize here that the ansatz for the decay of the relative entropy $\DD_{\KL}\left({\bf p}(t)~||~ {\bf p}^*\right)$ \eqref{eq:decay_D_KL} is inspired from a very recent work on the vanilla unbiased exchange model \cite{cao_interacting_2022}, which corresponds to the special case of the model at hand where $\nu = 0$.

\begin{figure}[ht]
  \centering
  \includegraphics[width=.97\textwidth]{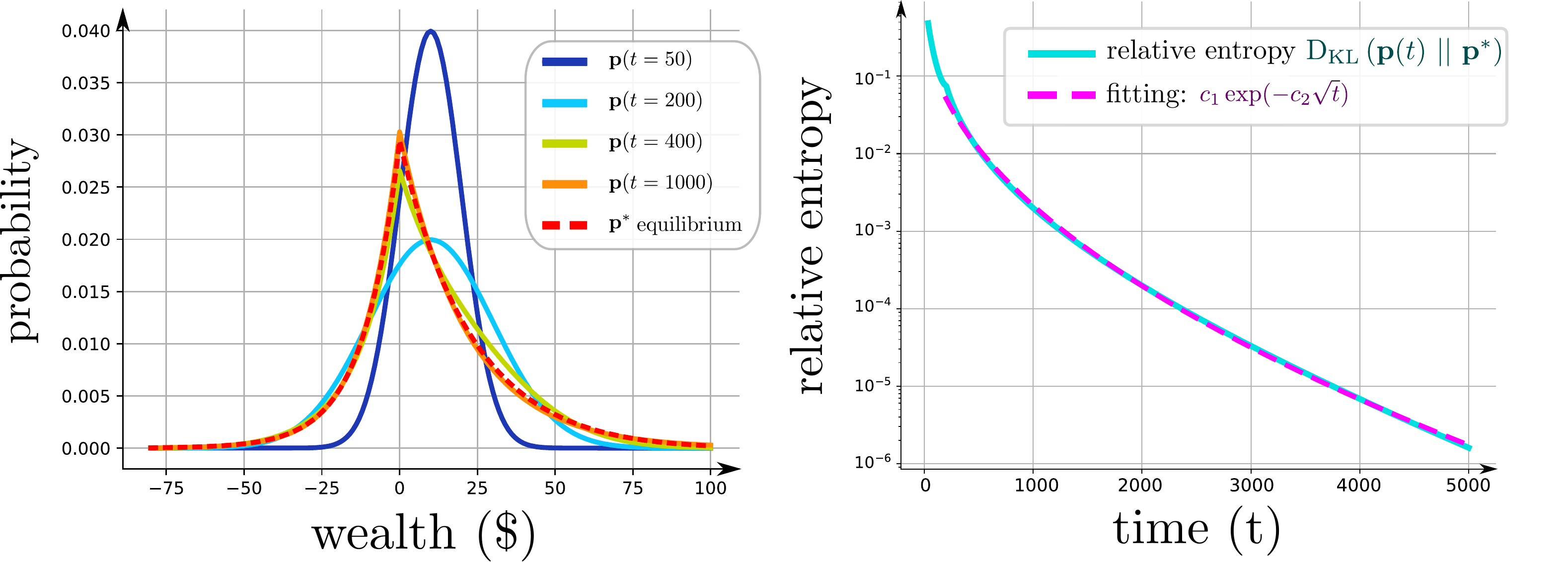}
  \caption{{\bf Left:} evolution of the distribution ${\bf p}(t)$ toward the equilibrium ${\bf p}^*$ \eqref{eq:doubl-geometric}. The dynamics is in phase I till $t\approx 200$. {\bf Right:} evolution of the relative entropy \eqref{eq:KL} between the solution ${\bf p}(t)$ and the equilibrium ${\bf p}^*$. In phase II (i.e. $t>200$), the decay is approximately given by $c_1 \exp(-c_2 \sqrt{t})$ with $c_1=.674$ and $c_2=.182$.}
  \label{fig:cv_equilibrium}
\end{figure}

\subsection{Linearization analysis}\label{subsec:3.3}

We perform a standard linearization analysis near the two-sided geometric equilibrium distribution ${\bf p}^*$. The hope is that for some ranges of parameter choices for $\mu$ and $\nu$, the linearized entropy will decay exponentially fast in time. As we will soon realize, to enlarge the parameter choices of $\mu$ and $\nu$ for which the linearized entropy decreases exponentially fast, one has to find the best possible constants for certain Poincar\'e-type inequalities.

We linearize the ODE system \eqref{eq:Q_2_a}-\eqref{eq:Q_2_b} near its equilibrium ${\bf p}^*$, i.e., we set $p_n = p^*_n + \varepsilon\,w_n$ for $0 < \varepsilon \ll 1$ for all $n \in \mathbb Z$ to obtain the following linearized system around ${\bf p}^*$:

\begin{equation}\label{eq:ODE_linearized}
\frac{\dd}{\dd t} {\bf w} = \mathcal{L}[{\bf w}]
\end{equation}
with
\begin{equation*}
\mathcal{L}[{\bf w}]_n = w_{n+1} - w_n + \frac{r^*}{r^*+p^*_0}\,(w_{n-1}-w_n) + \alpha\,\frac{r^*}{r^*+p^*_0}\,(p^*_{n-1}-p^*_n),\quad \text{for}~ n \geq 1,
\end{equation*}
\begin{equation*}
\begin{aligned}
\mathcal{L}[{\bf w}]_0 &= w_1 + \frac{r^*}{r^*+p^*_0}\,(w_{-1}-w_0) + \alpha\,\frac{r^*}{r^*+p^*_0}\,(p^*_{-1}-p^*_0) \\
&~~- \left(w_0 + (\alpha + \beta)\,p^*_0\right)\,\frac{r^*\,d^*}{(r^*+p^*_0)\,(d^*+p^*_0)},
\end{aligned}
\end{equation*}
and
\begin{equation*}
\begin{aligned}
\mathcal{L}[{\bf w}]_n &= \frac{r^*\,d^*}{(r^*+p^*_0)\,(d^*+p^*_0)}\,(w_{n+1}-w_n) + \frac{r^*}{r^*+p^*_0}\,(w_{n-1}-w_n) \\
&~~+ \alpha\,\frac{r^*}{r^*+p^*_0}\,(p^*_{n-1}-p^*_n) + \frac{r^*\,d^*\,(\alpha + \beta)\,(p^*_{n+1}-p^*_n)}{(r^*+p^*_0)\,(d^*+p^*_0)},\quad \text{for}~ n \leq -1,
\end{aligned}
\end{equation*}
where we set \[\alpha := \frac{\sum\limits_{n \geq 1} w_n}{r^*} - \frac{\sum\limits_{n \geq 0} w_n}{r^*+p^*_0},\quad \text{and}\quad \beta := \frac{\sum\limits_{n \leq -1} w_n}{d^*} - \frac{\sum\limits_{n \leq 0} w_n}{d^*+p^*_0}.\]
It is straightforward to check that \[{\bf w}(t) \in \VV := \{{\bf w} \in \ell^1(\mathbb Z) \mid \sum_{n \in \mathbb Z} w_n =0,~\sum_{n \in \mathbb Z} n\,w_n =0,~\sum_{n \leq 0} n\,w_n =0\}\] for all $t$. Moreover, if we denote the linearized entropy associated with the linearized system \eqref{eq:ODE_linearized} by
\begin{equation}\label{eq:linearized_entropy}
\E[{\bf w}] := \sum\limits_{n \in \mathbb Z} \frac{w^2_n}{p^*_n},
\end{equation}
a direct computation yields
\begin{equation}\label{eq:derivative_of_E}
\begin{aligned}
\frac 12\,\frac{\dd}{\dd t}\, \E[{\bf w}] &= \left(1 - \frac{r^*\,d^*}{(r^*+p^*_0)\,(d^*+p^*_0)}\right)\frac{w^2_0}{p^*_0} + \alpha^2\,r^* + \beta^2\,\frac{r^*\,d^*}{r^*+p^*_0} \\
&~~- \sum\limits_{n\geq 0}\frac{(w_{n+1} - w_n)^2}{p^*_n} - \frac{r^*}{r^*+p^*_0}\,\sum\limits_{n\leq -1}\frac{(w_{n+1} - w_n)^2}{p^*_{n+1}}.
\end{aligned}
\end{equation}
We can bound $\alpha^2$ as follows. Introduce $\{\alpha_n\}_{n \neq 0}$ by defining $\alpha_n = \frac{1}{r^*}$ for $n \geq 1$ and $\alpha_n = \frac{1}{r^*+p^*_0}$ for $n\leq -1$. Similarly, let $\{\lambda_n\}_{n \neq 0}$ be defined by $\lambda_n = \gamma_1$ for $n \geq 1$ and $\lambda_n = \gamma_2$ for $n\leq -1$, where $\gamma_1$ and $\gamma_2$ are constants whose choices will be optimized later. The definition of $\alpha$, together with the fact that ${\bf w} \in \VV$, allows us to deduce that
\begin{equation*}
\begin{aligned}
\alpha^2 &= \left(\sum\limits_{n \neq 0} (\lambda_n\,n + 1)\,\alpha_n\,w_n \right)^2 \\
&\leq \left(\frac{\sum\limits_{n\geq 1} (\gamma_1\,n + 1)^2\,p^*_n}{(r^*)^2} + \frac{\sum\limits_{n\leq -1} (\gamma_2\,n + 1)^2\,p^*_n}{(r^*+p^*_0)^2}\right)\,\sum\limits_{n\neq 0} \frac{w^2_n}{p^*_n}.
\end{aligned}
\end{equation*}
Optimizing the values of $\gamma_1$ and $\gamma_2$ gives rise to $\gamma_1 = -\frac{p^*_0}{2\,r^*+p^*_0}$ and $\gamma_2 = \frac{p^*_0}{2\,d^*+p^*_0}$. Thus, we obtain
\begin{equation}\label{eq:bound_alpha}
\alpha^2 \leq \left(\frac{1}{2\,r^*+p^*_0} + \frac{(d^*)^2}{(2\,d^*+p^*_0)\,(r^*+p^*_0)^2}\right)\,\sum\limits_{n\neq 0} \frac{w^2_n}{p^*_n}.
\end{equation}
In a similar fashion, we also get
\begin{equation}\label{eq:bound_beta}
\beta^2 \leq \left(\frac{1}{2\,d^*+p^*_0} + \frac{(r^*)^2}{(2\,r^*+p^*_0)\,(d^*+p^*_0)^2}\right)\,\sum\limits_{n\neq 0} \frac{w^2_n}{p^*_n},
\end{equation}
as well as
\begin{equation}\label{eq:bound_w_and_p0}
\frac{w^2_0}{p^*_0} \leq \left(\frac{\frac{(r^*)^2}{2\,r^*+p^*_0} + \frac{(d^*)^2}{(2\,d^*+p^*_0)}}{p^*_0 + \frac{(r^*)^2}{2\,r^*+p^*_0} + \frac{(d^*)^2}{(2\,d^*+p^*_0)}}\right)\,\sum\limits_{n \in \mathbb Z} \frac{w^2_n}{p^*_n}.
\end{equation}
Thus, we have the following upper bound:
\begin{equation}\label{eq:upper_bound}
\begin{aligned}
&\left(1 - \frac{r^*\,d^*}{(r^*+p^*_0)\,(d^*+p^*_0)}\right)\frac{w^2_0}{p^*_0} + \alpha^2\,r^* + \beta^2\,\frac{r^*\,d^*}{r^*+p^*_0} \\
&\quad\leq \Bigg(1 - \frac{r^*\,d^*}{(r^*+p^*_0)\,(d^*+p^*_0)} - \frac{r^*}{2\,r^*+p^*_0} - \frac{r^*\,(d^*)^2}{(2\,d^*+p^*_0)\,(r^*+p^*_0)^2} \\
&\quad~- \frac{r^*\,d^*}{(r^*+p^*_0)\,(2\,d^*+p^*_0)} - \frac{(r^*)^3\,d^*}{(r^*+p^*_0)\,(2\,r^*+p^*_0)\,(d^*+p^*_0)^2} \Bigg)\,\frac{w^2_0}{p^*_0} \\
&\qquad~ + \left(\frac{r^*}{2\,r^*+p^*_0} + \frac{r^*\,(d^*)^2}{(2\,d^*+p^*_0)\,(r^*+p^*_0)^2}\right)\,\E[{\bf w}] \\
&\qquad~ + \left(\frac{r^*\,d^*}{(r^*+p^*_0)\,(2\,d^*+p^*_0)} + \frac{(r^*)^3\,d^*}{(r^*+p^*_0)\,(2\,r^*+p^*_0)\,(d^*+p^*_0)^2}\right)\,\E[{\bf w}].
\end{aligned}
\end{equation}
On the other hand, we have the following simple lower bound on $\sum\limits_{n\geq 0}\frac{(w_{n+1} - w_n)^2}{p^*_n}$:
\begin{equation}\label{eq:lower_bound1}
\sum\limits_{n\geq 0}\frac{(w_{n+1} - w_n)^2}{p^*_n} \geq \left(\frac{\sqrt{r^*+p^*_0}-\sqrt{r^*}}{\sqrt{r^*+p^*_0}}\right)^2\,\sum\limits_{n \geq 0} \frac{w^2_n}{p^*_n}.
\end{equation}
Indeed, we have
\begin{align*}
\sum\limits_{n\geq 0}\frac{w^2_n}{p^*_n} &= \sum\limits_{n\geq 0} \frac{1}{p^*_n}\left(\sum\limits_{k \geq n+1}(w_{k-1}-w_k)\right)^2\\
&= \sum\limits_{n\geq 0}\frac{1}{p^*_n}\,\sum\limits_{k\geq n+1} (w_{k-1}-w_k)\,\sum\limits_{\ell \geq n+1} (w_{\ell-1}-w_\ell) \\
&= \sum\limits_{k,\ell\geq 1}(w_{k-1}-w_k)\,(w_{\ell-1}-w_\ell)\,\sum\limits_{0\leq n\leq \min(k-1,\ell-1)} \frac{1}{p^*_n} \\
&= \frac{2\,r^*}{p^*_0}\,\sum\limits_{1\leq k <\ell}(w_{k-1}-w_k)\,(w_{\ell-1}-w_\ell)\,\left(\frac{1}{p^*_k}-\frac{1}{p^*_0}\right) \\
&\qquad+ \frac{r^*}{p^*_0}\,\sum\limits_{k\geq 1} (w_{k-1}-w_k)^2\,\left(\frac{1}{p^*_k}-\frac{1}{p^*_0}\right) \\
&\leq \frac{2\,r^*}{p^*_0}\,\left(\sum\limits_{k\geq 1}\frac{(w_{k-1} - w_k)^2}{p^*_k}\right)^{\frac 12}\left(\sum\limits_{k \geq 1} \frac{w^2_n}{p^*_n}\right)^{\frac 12} + \frac{r^*}{p^*_0}\,\sum\limits_{k\geq 1}\frac{(w_{k-1} - w_k)^2}{p^*_k} \\
&= \frac{2\,\sqrt{r^*\,(r^*+p^*_0)}}{p^*_0}\,\left(\sum\limits_{k\geq 1}\frac{(w_{k-1} - w_k)^2}{p^*_{k-1}}\right)^{\frac 12}\left(\sum\limits_{k \geq 0} \frac{w^2_n}{p^*_n}\right)^{\frac 12} \\
&\qquad + \frac{r^*+p^*_0}{p^*_0}\,\sum\limits_{k\geq 1}\frac{(w_{k-1} - w_k)^2}{p^*_{k-1}},
\end{align*}
from which the advertised lower bound \eqref{eq:lower_bound1} follows. By a parallel reasoning, we also obtain
\begin{equation}\label{eq:lower_bound2}
\sum\limits_{n\leq -1}\frac{(w_{n+1} - w_n)^2}{p^*_{n+1}} \geq \left(\frac{\sqrt{d^*+p^*_0}-\sqrt{d^*}}{\sqrt{d^*+p^*_0}}\right)^2\,\sum\limits_{n \leq 0} \frac{w^2_n}{p^*_n}.
\end{equation}
Combining \eqref{eq:lower_bound1} and \eqref{eq:lower_bound2} leads us to the following lower bound:
\begin{equation}\label{eq:lower_bound}
\begin{aligned}
&\sum\limits_{n\geq 0}\frac{(w_{n+1} - w_n)^2}{p^*_n} + \frac{r^*}{r^*+p^*_0}\,\sum\limits_{n\leq -1}\frac{(w_{n+1} - w_n)^2}{p^*_{n+1}} \\
&\geq \left(\frac{\sqrt{r^*+p^*_0}-\sqrt{r^*}}{\sqrt{r^*+p^*_0}}\right)^2\,\sum\limits_{n \geq 0} \frac{w^2_n}{p^*_n} + \frac{r^*}{r^*+p^*_0}\,\left(\frac{\sqrt{d^*+p^*_0}-\sqrt{d^*}}{\sqrt{d^*+p^*_0}}\right)^2\,\sum\limits_{n \leq 0} \frac{w^2_n}{p^*_n} \\
&\geq \min\left\{\left(\frac{\sqrt{r^*+p^*_0}-\sqrt{r^*}}{\sqrt{r^*+p^*_0}}\right)^2,\,
\frac{r^*}{r^*+p^*_0}\,\left(\frac{\sqrt{d^*+p^*_0}-\sqrt{d^*}}{\sqrt{d^*+p^*_0}}\right)^2 \right\}\,\E[{\bf w}] \\
&~~ + \max\left\{\left(\frac{\sqrt{r^*+p^*_0}-\sqrt{r^*}}{\sqrt{r^*+p^*_0}}\right)^2,\,
\frac{r^*}{r^*+p^*_0}\,\left(\frac{\sqrt{d^*+p^*_0}-\sqrt{d^*}}{\sqrt{d^*+p^*_0}}\right)^2 \right\}\,\frac{w^2_0}{p^*_0}.
\end{aligned}
\end{equation}
To alleviate the writing, we define
\begin{equation}\label{def:C1}
\begin{aligned}
C_1 &= 1 - \frac{r^*\,d^*}{(r^*+p^*_0)\,(d^*+p^*_0)} - \frac{r^*}{2\,r^*+p^*_0} - \frac{r^*\,(d^*)^2}{(2\,d^*+p^*_0)\,(r^*+p^*_0)^2}\\
&\quad- \frac{r^*\,d^*}{(r^*+p^*_0)\,(2\,d^*+p^*_0)} - \frac{(r^*)^3\,d^*}{(r^*+p^*_0)\,(2\,r^*+p^*_0)\,(d^*+p^*_0)^2},
\end{aligned}
\end{equation}
\begin{equation}\label{def:C2}
C_2 = \max\left\{\left(\frac{\sqrt{r^*+p^*_0}-\sqrt{r^*}}{\sqrt{r^*+p^*_0}}\right)^2,\,
\frac{r^*}{r^*+p^*_0}\,\left(\frac{\sqrt{d^*+p^*_0}-\sqrt{d^*}}{\sqrt{d^*+p^*_0}}\right)^2 \right\},
\end{equation}
\begin{equation}\label{def:C3}
\begin{aligned}
C_3 &= \frac{r^*}{2\,r^*+p^*_0} + \frac{r^*\,(d^*)^2}{(2\,d^*+p^*_0)\,(r^*+p^*_0)^2} + \frac{r^*\,d^*}{(r^*+p^*_0)\,(2\,d^*+p^*_0)} \\
&\qquad  + \frac{(r^*)^3\,d^*}{(r^*+p^*_0)\,(2\,r^*+p^*_0)\,(d^*+p^*_0)^2},
\end{aligned}
\end{equation}
\begin{equation}\label{def:C4}
C_4 = \min\left\{\left(\frac{\sqrt{r^*+p^*_0}-\sqrt{r^*}}{\sqrt{r^*+p^*_0}}\right)^2,\,
\frac{r^*}{r^*+p^*_0}\,\left(\frac{\sqrt{d^*+p^*_0}-\sqrt{d^*}}{\sqrt{d^*+p^*_0}}\right)^2 \right\},
\end{equation}
and \begin{equation}\label{def:gamma}
\gamma = \frac{\frac{(r^*)^2}{2\,r^*+p^*_0} + \frac{(d^*)^2}{(2\,d^*+p^*_0)}}{p^*_0 + \frac{(r^*)^2}{2\,r^*+p^*_0} + \frac{(d^*)^2}{(2\,d^*+p^*_0)}}.
\end{equation}
Then we arrive at the following differential inequality for the linearized entropy $\E[{\bf w}]$:
\begin{equation}\label{eq:inequality}
\frac 12\,\frac{\dd}{\dd t}\, \E \leq -(C_2 - C_1)\,\frac{w^2_0}{p^*_0} - (C_4 - C_3)\,\E.
\end{equation}
If we recall the upper bound \eqref{eq:bound_w_and_p0} on $\frac{w^2_0}{p^*_0}$, in order to have an exponential decay result for the linearized entropy, it suffices to ensure that
\begin{equation}\label{eq:sufficiency_cond}
C_4 - C_3 - \gamma\,(C_1 - C_2)\,\mathbbm{1}\{C_1 > C_2\} > 0.
\end{equation}
For convenience, we denote by $\mathcal{G}_{\mu,\nu}$ the collection of the parameter values $(\mu,\nu) \in \mathbb{R}^2_+$ for which the condition \eqref{eq:sufficiency_cond} is satisfied. We claim that this set is non-empty, whence the linearized entropy will decay exponentially fast in time for all $(\mu,\nu) \in \mathcal{G}_{\mu,\nu}$.

\begin{theorem}\label{thm:exponential_decay_linearized_entropy}
The set $\mathcal{G}_{\mu,\nu}$ is not empty. Consequently, the linearized entropy $\E$ defined by \eqref{eq:linearized_entropy} decreases exponentially fast in time if $(\mu,\nu) \in \mathcal{G}_{\mu,\nu}$.
\end{theorem}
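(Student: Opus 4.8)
The plan is to exhibit an explicit family of parameters inside $\mathcal{G}_{\mu,\nu}$ by verifying the sufficiency condition \eqref{eq:sufficiency_cond} on the ``no-bank'' boundary $\nu=0$ and then moving slightly into the region $\nu>0$ by a continuity argument. The structural fact that makes this feasible is that $C_1,\dots,C_4$ and $\gamma$ (see \eqref{def:C1}--\eqref{def:gamma}) are algebraic functions of the equilibrium triple $(r^*,d^*,p^*_0)$ whose denominators stay bounded away from $0$ on $\{\mu>0,\ \nu\geq 0\}$; moreover, since $\gamma\,(C_1-C_2)\,\mathbbm{1}\{C_1>C_2\}=\gamma\,(C_1-C_2)^{+}$ with $(x)^{+}:=\max\{x,0\}$, the left-hand side of \eqref{eq:sufficiency_cond}, namely $C_4-C_3-\gamma\,(C_1-C_2)^{+}$, is a genuinely continuous function of $(\mu,\nu)$ up to and including $\nu=0$. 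Hence a strict inequality at a boundary point $(\mu,0)$ persists on an open neighbourhood, in particular at nearby points with $\nu>0$.

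First I would evaluate the equilibrium at $\nu=0$. From $r^*(r^*+p^*_0)=p^*_0\,\mu(\nu+1)$, $d^*(d^*+p^*_0)=p^*_0\,\mu\nu$ and $r^*+d^*+p^*_0=1$ (equivalently, from \eqref{eq:p*0}--\eqref{eq:r*d*}) one gets $d^*=0$, $p^*_0=\tfrac1{\mu+1}$ and $r^*=\tfrac{\mu}{\mu+1}=:r$, so $p^*_0=1-r$. Substituting $d^*=0$ and $r^*+p^*_0=1$ into \eqref{def:C1}--\eqref{def:gamma} collapses almost everything, leaving $C_3=\tfrac{r}{1+r}$, $C_1=\tfrac1{1+r}$, $\gamma=r^2$, and $C_4=\min\{(1-\sqrt r)^2,\,r\}$, $C_2=\max\{(1-\sqrt r)^2,\,r\}$. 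I would then restrict to $\mu\in(0,\tfrac13)$, i.e.\ $r\in(0,\tfrac14)$: there $\sqrt r<\tfrac12$, hence $(1-\sqrt r)^2>r$, so $C_4=r$ and $C_2=(1-\sqrt r)^2$; an elementary inequality (reduce $C_1>C_2$ to $(1+t^2)(1-t)^2<1$ with $t=\sqrt r\in(0,\tfrac12)$) shows $C_1>C_2$ on this range, so the indicator in \eqref{eq:sufficiency_cond} equals $1$.

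The decisive step is then the cancellation
\[
C_4-C_3-\gamma\,(C_1-C_2)\;=\;r-\frac{r}{1+r}-r^2\!\left(\frac{1}{1+r}-(1-\sqrt r)^2\right)\;=\;r^2\,(1-\sqrt r)^2\;>\;0,
\]
valid for every $r\in(0,\tfrac14)$. Thus \eqref{eq:sufficiency_cond} holds strictly at each boundary point $(\mu,0)$ with $\mu\in(0,\tfrac13)$, and by the continuity noted above it holds on an open neighbourhood; consequently, for each $\mu\in(0,\tfrac13)$ and all sufficiently small $\nu>0$ we have $(\mu,\nu)\in\mathcal{G}_{\mu,\nu}$, so $\mathcal{G}_{\mu,\nu}\neq\emptyset$. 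The exponential decay is then immediate: on $\mathcal{G}_{\mu,\nu}$ the constant $c:=C_4-C_3-\gamma\,(C_1-C_2)^{+}$ is positive, and feeding the bound \eqref{eq:bound_w_and_p0} on $w_0^2/p^*_0$ into \eqref{eq:inequality} yields $\tfrac12\,\tfrac{\dd}{\dd t}\,\E\leq -c\,\E$, whence $\E[{\bf w}(t)]\leq e^{-2ct}\,\E[{\bf w}(0)]$ by Gr\"onwall's lemma.

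The main obstacle is choosing the right degenerate regime at all: the inequality $C_4-C_3>0$ is necessary for \eqref{eq:sufficiency_cond}, yet it fails in every ``obvious'' limit --- $\mu\to\infty$ with $\nu$ fixed, $\nu\to\infty$, or $\mu\to0$ with $\nu$ fixed --- because there $p^*_0\to0$ (or $r^*\to0$), which forces the Poincar\'e-type constant $C_4$ to collapse strictly faster than the entropy-waste constant $C_3$, so the bounds simply cannot close. One is therefore pushed to keep $p^*_0$ bounded below while still sending $d^*\to0$, which is exactly what $\nu\to0$ does; and the extra restriction $\mu<\tfrac13$ is what keeps $C_4=r$ (rather than the much smaller $(1-\sqrt r)^2$) and simultaneously produces the clean cancellation above. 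A harmless technical wrinkle is that the linearized entropy $\E$ itself degenerates at $\nu=0$ (the equilibrium there has $p^*_n=0$ for $n<0$); this does not matter, since the endpoint $\nu=0$ is used only to locate an open set inside $\{\nu>0\}$, where every object is non-degenerate.
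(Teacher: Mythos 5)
Your proposal is correct, and it proves the theorem by a genuinely different route than the paper. The paper's proof is purely numerical: it exhibits the single point $(\mu,\nu)=(0.01,0.001)$ and reports $C_4-C_3-\gamma\,(C_1-C_2)\,\mathbbm{1}\{C_1>C_2\}\approx 1.6647\cdot 10^{-5}>0$, then invokes \eqref{eq:inequality} exactly as you do. You instead evaluate the equilibrium in closed form on the boundary $\nu=0$ (where $d^*=0$, $p^*_0=\tfrac{1}{\mu+1}$, $r^*=\tfrac{\mu}{\mu+1}$; I checked these against \eqref{eq:p*0}--\eqref{eq:r*d*}), reduce the constants to $C_1=\tfrac1{1+r}$, $C_3=\tfrac{r}{1+r}$, $\gamma=r^2$, $C_2=\max\{(1-\sqrt r)^2,r\}$, $C_4=\min\{(1-\sqrt r)^2,r\}$, and for $r\in(0,\tfrac14)$ obtain the exact cancellation $C_4-C_3-\gamma(C_1-C_2)=r^2(1-\sqrt r)^2>0$ (this identity is correct: $r-\tfrac{r}{1+r}=\tfrac{r^2}{1+r}$ cancels $\tfrac{r^2}{1+r}$), then pass to $\nu>0$ by continuity of $C_1,\dots,C_4,\gamma$ in $(\mu,\nu)$ (legitimate, since all denominators involve $p^*_0$, $r^*+p^*_0$, $d^*+p^*_0>0$ near such boundary points, and writing the indicator term as $\gamma\,(C_1-C_2)^+$ makes the left-hand side of \eqref{eq:sufficiency_cond} continuous). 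What your argument buys is an explicit open region of admissible parameters --- every $\mu\in(0,\tfrac13)$ with $\nu>0$ small --- and independence from floating-point evaluation; what the paper's buys is brevity, and its chosen point $(0.01,0.001)$ sits squarely in the regime your analysis identifies. Your handling of the degenerate endpoint (noting that $\E$ itself is ill-defined at $\nu=0$ but that the endpoint is only used to locate points with $\nu>0$) and the final Gr\"onwall step via \eqref{eq:bound_w_and_p0} and \eqref{eq:inequality} are both sound.
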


\begin{proof}
To demonstrate the non-emptiness of the set $\mathcal{G}_{\mu,\nu}$ it suffices to present a specific example. For this purpose, we can take $(\mu,\nu) = (0.01,0.001)$ to obtain $C_4 - C_3 - \gamma\,(C_1 - C_2)\,\mathbbm{1}\{C_1 > C_2\} \approx 1.6647\cdot 10^{-5} > 0$. The second part of the proposition follows immediately from the definition of $\mathcal{G}_{\mu,\nu}$ and the differential inequality \eqref{eq:inequality}.
\end{proof}

\begin{remark}
The lower bounds \eqref{eq:lower_bound1} and \eqref{eq:lower_bound2} are far from being optimal as their arguments rely sole on some elementary Cauchy-Schwarz inequalities and the constraint that ${\bf w} \in \VV$  has not been exploited at all. We speculate that these lower bounds (Poincar\'e-type inequalities) can somehow be significantly sharpened, leading to a refined version of Theorem \ref{thm:exponential_decay_linearized_entropy}.
\end{remark}

\section{Debt induced wealth inequality}\label{sec:Gini_comparison}
\setcounter{equation}{0}

We would like to conclude our analysis of the model by investigating the effect of the bank on the inequality of the wealth distribution, i.e. does the bank increase or decrease inequality? To measure inequality, we use the Gini index $G$ which is usually an indicator between $0$ (no inequality) and $1$ (maximum inequality) and measures the inequality in the wealth distribution, and a higher Gini index indicates worse equality in the society (in terms of the distribution of wealth).

\begin{definition}[\textbf{Gini index}]
For a given probability mass function ${\bf p} \in \mathcal{P}(\mathbb Z)$ with mean $\mu \in \mathbb{R}_+$, the Gini index of ${\bf p}$ is defined by
\begin{equation}\label{def1:Gini}
G[{\bf p}] = \frac{1}{2\,\mu} \sum\limits_{i\in \mathbb Z}\sum\limits_{j \in \mathbb Z} |i-j|\,p_i\,p_j.
\end{equation}
Equivalently, a probabilistic definition of $G[{\bf p}]$ is given by
\begin{equation}\label{def2:Gini}
G[{\bf p}] = \frac{1}{2\,\mu} \mathbb{E}[|\overbar{S} - \overbar{S}'|],
\end{equation}
where $\overbar{S}$ has ${\bf p}$ as its probability mass function and $\overbar{S}'$ is an independent copy of $\overbar{S}$.
\end{definition}

\begin{remark}
Without debt, i.e. if $p_n=0$ for $n<0$ or $\overbar{S}≥0$, one can show that the Gini index $G$ is always between $0$ (no inequality) and $1$ (maximum inequality). Indeed, using triangular inequality:
\begin{equation}
  G[{\bf p}] ≤ \frac{1}{2\,\mu} \big(\mathbb{E}[|\overbar{S}|] + \mathbb{E}[|\overbar{S}'|]\big)
  \;= \; \frac{1}{2\,\mu} \big(\mathbb{E}[\,\overbar{S}\,] \,+\, \mathbb{E}[\,\overbar{S}'\,]\big)    \; \;=  \;1.
\end{equation}
However, the Gini index is no longer bounded if the wealth distribution includes debts. For instance, taking a Gaussian distribution with mean $μ>0$ and variance $σ^2$, i.e. $X\sim \mathcal{N}(μ,σ^2)$, we obtain:
\begin{eqnarray}
  G[X] &=& \frac{1}{2μ} \mathbb{E}[|Z|] \qquad \text{with } Z\sim \mathcal{N}(0,2σ^2) \\
  \label{eq:gini_gaussian_distribution}
  &=& \frac{1}{2μ}\frac{2σ}{\sqrt{π}}
\end{eqnarray}
Thus, if $σ \gg μ$, the Gini index could be arbitrary large.
\end{remark}

We would like to compare the evolution of Gini index for the basic unbiased exchange model and the model at hand (with bank and debt) at the level of the deterministic ODE system, starting from the same initial distribution. First, we recall that the unbiased exchange model investigated in \cite{cao_derivation_2021,graham_rate_2009,lanchier_rigorous_2017,merle_cutoff_2019} is a special case of the model studied in this work with $\nu = 0$, meaning that the bank does not exist and agents are not allowed to go into debt. The rigorous mean-field analysis shown in \cite{cao_derivation_2021,graham_rate_2009,merle_cutoff_2019} implies that, if we denote by ${\bf q}(t)=\left(q_0(t),q_1(t),\ldots\right)$ the law of the amount of dollars a typical agent has as $N \to \infty$, its time evolution will be given by:
\begin{equation}
\label{eq:vanilla_unbiased_exchange}
\frac{\dd}{\dd t} {\bf q}(t) = \,Q[{\bf q}(t)]
\end{equation}
with
\begin{equation}
  \label{eq:Q}
   Q[{\bf q}]_n:= \left\{
    \begin{array}{ll}
      q_{n+1} + \bar{r}\,q_{n-1} - (1 + \bar{r})\,q_n, &\quad \text{for}~ n \geq 1,\\
      q_1 - \bar{r}\,q_0 & \quad \text{for}~ n= 0, \\
    \end{array}
  \right.
\end{equation}
where $\bar{r} := \sum_{n\geq 1} q_n$.

Intuitively, the model with bank investigated in this manuscript permits agents without dollars in their pocket or agents in debt to be picked to give, thus we speculate that the Gini index of the distribution ${\bf p}(t)$ solution of \eqref{eq:evolution_p} is always larger than the corresponding Gini index of the distribution ${\bf q}$ (if they start from the same initial condition).

We provide in figure \ref{fig:gini_index}-left the evolution of the Gini index for different values of $ν$ (which measures the ratio of the initial total amount of cash in the bank to the agents' combined initial wealth). Without the bank, i.e., when $ν=0$, the Gini index quickly reaches its maximum around $G\approx .8$. However, when the bank comes into play ($ν≥.25$), the Gini index could eventually exceed $1$. Moreover, the phases I and II are clearly seen in the evolution of the Gini index. In phase I, the Gini index grows like $\mathcal{O}(\sqrt{t})$ which is due to the diffusive nature of the dynamics coupled with formula \eqref{eq:gini_gaussian_distribution}. In phase II, the growth of the Gini index starts to saturate and the curve converges to its maximum. In figure \ref{fig:gini_index}-right, we provide the corresponding wealth distribution at the final time of the computation, i.e. ${\bf p}(t=5000)$, depending on the wealth of the bank $ν$. Without bank ($ν=0$), the distribution is exponential but it becomes an asymmetric two-sided exponential for $ν>0$. Notice that in the case where $ν=5$ (which means that the bank has $5$ times more resources than the agents), the dynamics has not reached yet phase II at time $t=5000$ as we observe in the evolution of the Gini index $G$ in figure \ref{fig:gini_index}-left. As a result, the distribution ${\bf p}(t=5000)$ for $ν=5$ is still far away from equilibrium and the characteristic cusp at zero dollar has not yet appeared.

\begin{figure}[ht]
  \centering
  \includegraphics[width=.97\textwidth]{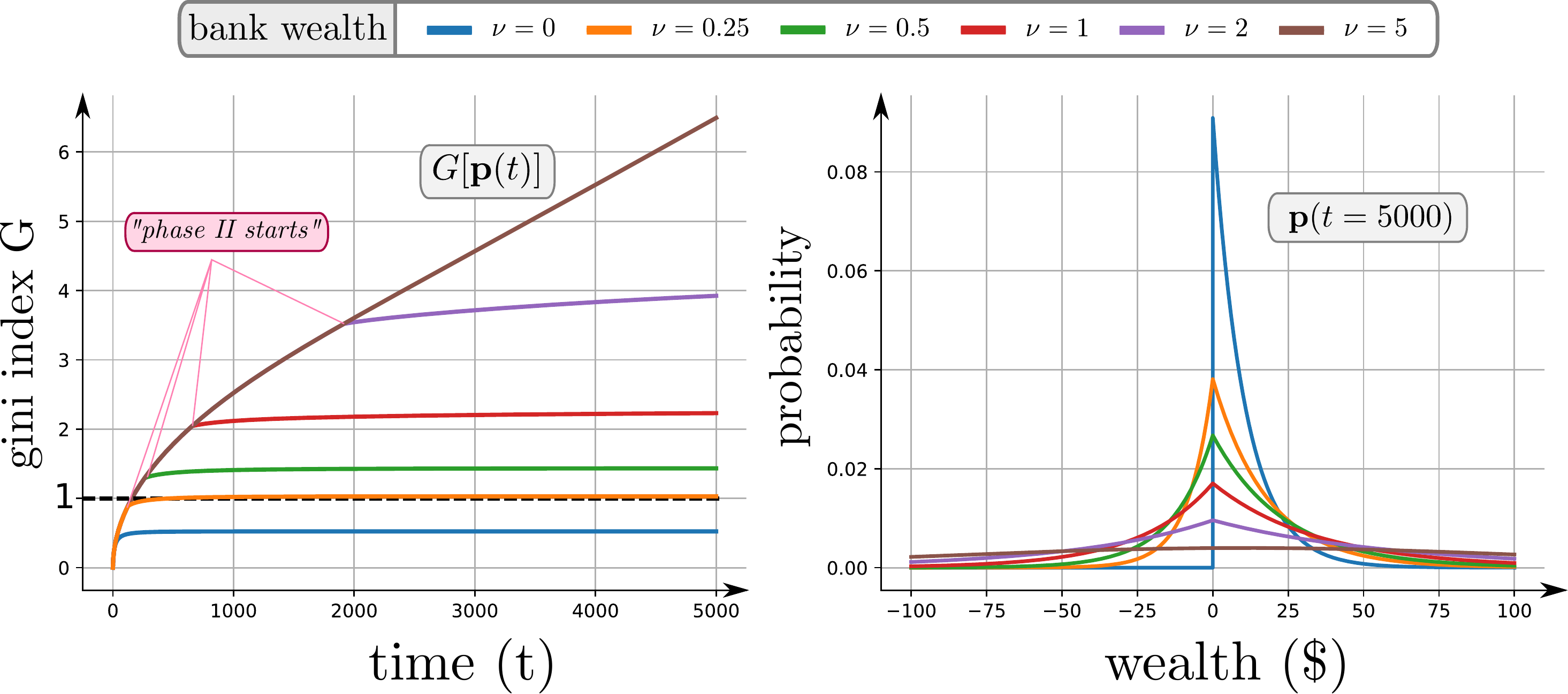}
  \caption{{\bf Left}: evolution of the Gini index $G$ \eqref{def1:Gini} for various resources of the bank $ν$. Notice that the Gini index could exceed $1$ when debt is present. {\bf Right}: the corresponding wealth distribution ${\bf p}$ at the final time of the computation $t=5000$. When $ν=5$, the dynamics has not yet reached the equilibrium distribution.}
  \label{fig:gini_index}
\end{figure}

Although we are not able to provide a complete proof of this natural and compelling conjecture based on heuristic reasoning and numerical simulations. The following simple observation is elementary:

\begin{proposition}\label{ppo:gini_increase}
Assume that ${\bf p}(t)$ is the solution of \eqref{eq:Q_1_a}-\eqref{eq:Q_1_b} for $t \in [0,t_*]$ with ${\bf p}(0) \in \mathcal{S}^+_\mu$, then the Gini index of ${\bf p}(t)$ is non-decreasing with respect to time.
\end{proposition}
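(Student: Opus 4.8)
The plan is to differentiate the bilinear formula \eqref{def1:Gini} for $G[{\bf p}(t)]$ directly and exploit the conservation laws of \textbf{Phase I}. Since $\mu = \sum_{n}n\,p_n(t)$ is preserved for $t\in[0,t_*]$, we get, using the symmetry $i\leftrightarrow j$,
\[
\frac{\dd}{\dd t}\,G[{\bf p}(t)] \;=\; \frac{1}{2\mu}\sum_{i,j}|i-j|\,(\dot p_i\,p_j+p_i\,\dot p_j) \;=\; \frac{1}{\mu}\sum_{i}\dot p_i\,g_i,\qquad g_i:=\sum_j |i-j|\,p_j .
\]
Because $\dot{\bf p}=Q_1[{\bf p}]$ and $Q_1[{\bf p}]_n=p_{n+1}+p_{n-1}-2p_n=:(\Delta {\bf p})_n$ is the discrete Laplacian on $\mathbb Z$ (see \eqref{eq:Q_1_b}), it remains to show $\sum_i(\Delta{\bf p})_i\,g_i\ge 0$.

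The key step is summation by parts: $\Delta$ is self-adjoint on $\mathbb Z$ (there are no boundary terms, only an absolutely-convergent rearrangement is needed), so $\sum_i(\Delta{\bf p})_i\,g_i=\sum_i p_i\,(\Delta g)_i$, and then the elementary identity $(\Delta|\cdot|)(k)=|k+1|+|k-1|-2|k|=2\,\mathbbm{1}\{k=0\}$ yields $(\Delta g)_i=\sum_j(\Delta|\cdot|)(i-j)\,p_j=2\,p_i$. Hence
\[
\frac{\dd}{\dd t}\,G[{\bf p}(t)] \;=\; \frac{2}{\mu}\sum_i p_i(t)^2\;\ge\;0,
\]
which is exactly the claimed monotonicity (and it is strict unless ${\bf p}(t)$ is a Dirac mass). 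Equivalently, one can route the argument through Remark~\ref{remark1}: $G[{\bf p}(t)]=\frac{1}{2\mu}\mathbb E|D_t|$ with $D_t=X_t-X_t'$ the difference of two independent copies of the walk, $D_t$ jumps by $\pm1$ at rate $2$ in each direction, its law ${\bf a}(t)$ solves $\dot a_n=2(\Delta{\bf a})_n$, and $\frac{\dd}{\dd t}\sum_n|n|\,a_n=2\sum_n(\Delta|\cdot|)(n)\,a_n=4\,a_0(t)=4\,\mathbb P[X_t=X_t']=4\sum_n p_n(t)^2\ge 0$, which is the same conclusion.

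The only genuine work is the integrability bookkeeping that legitimizes differentiating under the (double) sum and the summation by parts. Both are justified because $\sum_n|n|\,p_n(t)<\infty$ for every $t\in[0,t_*]$: this moment is finite at $t=0$ since ${\bf p}(0)\in\mathcal S^+_\mu$ (it equals $\mu$), and it stays finite along the flow since $\sum_n|n|\,p_n(t)=\mu+2\,D[{\bf p}(t)]\le \mu+2\mu\nu$ by the monotonicity and cap on $D[{\bf p}(t)]$ recorded before \eqref{eq:t_*}; consequently $g_i=O(|i|)$, while $\dot p_i=(\Delta{\bf p})_i$ is absolutely summable against $|i|$, so dominated convergence and Fubini apply. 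I expect this (very mild) integrability verification, together with correctly bookkeeping the $\Delta$-of-absolute-value identity, to be the main obstacle; there is no hard estimate involved.
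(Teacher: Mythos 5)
Your proof is correct and is essentially the paper's argument: both reduce the claim to the identity that the time derivative of the Gini index equals a positive multiple of the collision probability $\sum_n p_n^2$, you via direct differentiation of \eqref{def1:Gini}, summation by parts and $(\Delta|\cdot|)(k)=2\,\mathbbm{1}\{k=0\}$, the paper via the representation \eqref{def2:Gini} and the master equation for the law of $|\overbar{S}-\overbar{S}'|$ (a route you also sketch). Incidentally, your constant $\tfrac{2}{\mu}\sum_n p_n(t)^2$ is the correct one: in the paper's evolution \eqref{eq:Z} the equation at $n=1$ omits the doubled reflection rate out of $0$ (it should read $\dot z_1 = 2z_2+4z_0-4z_1$), so the paper's stated rate $z_0/\mu$ is off by a factor $2$, though the sign, and hence the monotonicity conclusion, is unaffected.
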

\begin{proof}
We resort to the probabilistic representation of the Gini index \eqref{def2:Gini}, so we introduce independent random variables $\overbar{S}$ and $\overbar{S}'$ having ${\bf p}$ as their common probability mass function. If we set $Z = |\overbar{S} - \overbar{S}'|$ and let ${\bf z} = \{z_n\}_{n\geq 0}$ to be law of $Z$, it is trivial to see that the evolution of ${\bf z}$ satisfies
\begin{equation}
  \label{eq:Z}
   \frac{\dd}{\dd t} z_n = \left\{
    \begin{array}{ll}
      2\,\left(z_{n+1} + z_{n-1} - 2\,z_n\right) & \quad \text{for}~ n \geq 1,\\
      2\,\left(z_1 - 2\,z_0\right) & \quad \text{for}~ n= 0, \\
    \end{array}
  \right.
\end{equation}
from which we deduce that
\begin{equation*}
\frac{\dd}{\dd t} G[{\bf p}] = \frac{1}{2\,\mu} \mathbb{E}[Z] = \frac{1}{\mu}\sum\limits_{n \geq 1} n\,(z_{n+1} - 2\,z_n + z_{n-1}) = \frac{z_0}{\mu} \geq 0.
\end{equation*}
In other words, the phase I evolution tends to accentuate the wealth inequality and the proof of Proposition \ref{ppo:gini_increase} is completed.
\end{proof}

\section{Conclusion}

In this manuscript, the so-called unbiased exchange model with collective debt limit is investigated, which can be viewed as an extension of the vanilla unbiased exchange model proposed in \cite{dragulescu_statistical_2000} and revisited in \cite{cao_derivation_2021,lanchier_rigorous_2017,merle_cutoff_2019}. Although the inclusion of bank creates additional difficulty in the analysis and only a formal mean-field argument is presented, we found that the prediction of the asymptotic distribution of wealth as $N \to \infty$ and $t \to \infty$ based on our two-phase dynamics \eqref{eq:evolution_p} agrees with the results reported/conjectured in earlier work \cite{lanchier_rigorous_2018-1,xi_required_2005}. To the best our of knowledge, there are no (even formal) mean-field analysis for the model at hand prior to the present work and we believe that our work leaves many open and challenging questions to be investigated on a more rigorous ground. For instance, is it possible to provide a rigorous proof of the propagation of chaos as $N \to \infty$ to arrive at the two-phase dynamics \eqref{eq:evolution_p}~? How can we extend the recent work \cite{cao_explicit_2021} to justify the ansatz for the decay of the relative entropy $\DD_{\KL}\left({\bf p}(t)~||~ {\bf p}^*\right)$ conjectured in \eqref{eq:decay_D_KL} (for a fairly generic initial datum)~? Is it possible to show rigorously the monotonicity of the Gini index of ${\bf p}(t)$ (solution of \eqref{eq:evolution_p}) with respect to $\nu$ during phase II~? The answer to these questions will enable us to have a better understanding of the role played by the bank and debt.

Lastly, we emphasize that econophysics models involving bank and debt have not received enough attention in the past few years, and the model investigated in this manuscript can be viewed as the representative of a promising and intriguing direction for further research in the area of econophysics. For instance, it would be interesting to introduce a bank in other closely related variants of the basic unbiased exchange model, such as the so-called poor-biased or rich-biased exchange model proposed in \cite{cao_derivation_2021}.

\end{document}